\newtheorem{problem}{Problem}[section]
\newtheorem{theorem}{Theorem}[section]
\crefname{hypothesis}{Hypothesis}{Hypotheses}
\numberwithin{equation}{section}
\newtheorem{lemma}{Lemma}[section]
\newtheorem{definition}{Definition}[section]
\theoremstyle{definition}
\begin{document}
\title{Inverse source problem of the biharmonic equation from multi-frequency phaseless data}

\author{
	Yan Chang\footnote{School of Mathematics, Harbin Institute of Technology, Harbin, P. R. China. {\it 21B312002@stu.hit.edu.cn} }, \ 
	Yukun Guo\footnote{School of Mathematics, Harbin Institute of Technology, Harbin, P. R. China. {\it ykguo@hit.edu.cn}}, \ and
	Yue Zhao\thanks{School of Mathematics and Statistics, and Key Lab NAA--MOE, Central China Normal University,
		Wuhan 430079, China.
		{\it zhaoy@ccnu.edu.cn} (Corresponding author)}
	}
	\date{}
	\maketitle
	
\begin{abstract}
	This work deals with an inverse source problem for the biharmonic wave equation.  A two-stage numerical method is proposed to identify the unknown source from the multi-frequency phaseless data. In the first stage, we introduce some artificially auxiliary point sources to the inverse source system and establish a phase retrieval formula. Theoretically, we point out that the phase can be uniquely determined and estimate the stability of this phase retrieval approach. Once the phase information is retrieved, the Fourier method is adopted to reconstruct the source function from the phased multi-frequency data. The proposed method is easy-to-implement and there is no forward solver involved in the reconstruction. Numerical experiments are conducted to verify the performance of the proposed method.
\end{abstract}
	
{\textbf{Keywords:} 
	Biharmonic equation, inverse source problem, phaseless data, phase retrieval, reference source, Fourier method
}

\section{Introduction}

The inverse problem of reconstructing the source from the measurements has wide applications in various scenarios varying from antenna synthesis, acoustic tomography, medical imaging, telecommunication, and many other fields (see, e.g., \cite{BEEH, Deng,  Leone, Stefanov, Thio} and the reference therein). 

Recently, much attention has been devoted to investigating the numerical methods for the inverse source problem. We refer to \cite{Alvex, Bao, LXD} for the iterative method, the recursive method, and the direct sampling method to reconstruct the acoustic source. As the method of interest, in recent years, the Fourier method has been paid enduring attention to and shows powerful ability in reconstructing the square-integrable source function independent of the wave number from the multi-frequency measurements. In \cite{IP15}, a Fourier method is proposed to reconstruct the source in the Helmholtz equation from multi-frequency near-field measurements. Further, this method is applied to various physical models such as the far-field case \cite{WXCfar}, the electromagnetic wave \cite{WG}, the elastic wave \cite{elasticFourier}, and the biharmonic case \cite{CGYZ23}. A major advantage of this method is that the method is based on the Fourier expansion of the source function and the Fourier coefficients of the expansion can be directly computed accurately from the multi-frequency measurements.  Since there is no forward solver involved in the reconstruction process, this method is easy to implement and direct.

However, there are wide practical situations where the measurements of full complex-valued data cannot be obtained, while only the intensity is available (see, e.g., \cite{9,11,47,48}). In such a scenario, the numerical method utilizing the full information fails to work well, and thus, it is necessary to develop an efficient phase retrieval method to retrieve the phase information such that the source can be reconstructed from the recovered radiated field. In recent years, there have been considerable studies on such problems. For example, Bao et. al \cite{Bao1} propose a continuation scheme to reconstruct the source from phaseless measurements on the boundary. In \cite{IP18}, Zhang et. al propose a phase retrieval technique by introducing the reference source points to the inverse source system and they utilize the Fourier method \cite{IP15} to recover the source from multi-frequency data. We also refer to \cite{Agaltsov, Dong, LYZZ21, IPI23} for the relevant numerical methods for the inverse scattering problem with phaseless data.

Arising from the modeling of elasticity, the biharmonic operator has wide applications ranging from offshore runway design, seismic cloaks, and the platonic crystal (see \cite{8, 22, 27, 30} and the references therein). Nevertheless, compared with the aforementioned second-order wave equation,  the inverse scattering problems for the fourth-order biharmonic wave equations are significantly less investigated. A reason accounting for this is that the increase of the order leads to the invalidity of the methods developed for the second-order equation. In addition, the solution of the higher-order equation admits more sophisticated properties, which may lead to the failure of the conventional computational methods.

In the current work, we are concerned with the inverse problem of recovering the source function in the biharmonic equation from the phaseless radiated field. The overall idea is divided into two stages. In the first stage, we design a phase retrieval technique to retrieve the phase from the phaseless data by introducing artificial point sources to the inverse source system. Then, the phaseless problem can be translated into its phased version with the reconstructed phase. During this stage, it is technical to choose the proper point sources as well as several parameters such that the absolute value of the determinants of the coefficient matrices admit positive lower bounds. These invertible matrices further guarantee the stability of the phase-retrieval system. Compared with its acoustic counterpart \cite{IP18}, in the present biharmonic case, the incorporation of the modified Bessel function brings many substantial difficulties in the analysis, especially for the stability estimates. Therefore, it is of significance to investigate the novel asymptotic behaviors of the modified Bessel function, which is one of the contributions of our paper.

In the second stage, the Fourier method can be adopted to reconstruct the source from the radiated field with recovered phase information. Note that the applicability of the Fourier method for identifying the biharmonic sources is studied in \cite{CGYZ23}. Motivated by \cite{CGYZ23, IP18}, we aim to extend the methodology to this new model of determining the biharmonic source from phaseless data. In addition, we would like to point out that, this framework of source recovery performs stably with the noise-corrupted measurements. Numerical experiments demonstrate that the combination of the phase retrieval technique and the Fourier method exhibits excellent capability in reconstructing the source function from the phaseless radiating data.

The rest of this paper is arranged as follows: In the next section, we introduce several reference source points to the inverse source system and develop an explicit formula to retrieve the phase information. Regarding the phase retrieval method, we analyze the stability in \Cref{sec: stability}. In \Cref{sec: Fourier}, we adopt the Fourier method to reconstruct the source from the retrieved information and finally verify the performance of the proposed method in \Cref{sec: numerical}.

\section{Preliminaries}\label{sec: Fourier}

We shall first give a mathematical formulation of the multifrequency inverse source problem under consideration. Then, the Fourier method for solving the inverse source problem will be briefly described.

\subsection{Model problem}
In this paper, we consider the inverse problem of reconstructing an unknown source in the biharmonic equation from the multi-frequency phaseless data. 

$$ 
$$

Let $a>0$ and set $V_0:=\left(-a, a\right)\times\left(-a, a\right)$. Suppose that $S\in L^2(\mathbb{R}^2)$ is the source function (independent of the wave number $k>0$) such that $\mathrm{supp} S\subset V_0$. The propagation of the biharmonic wave $u$ can be modeled by the fourth-order equation
\begin{align}\label{eq: biharmonic}
	\Delta^2u-k^4u=S\quad\text{in }\mathbb{R}^2.
\end{align}

To ensure that the wave field is outgoing, the Sommerfeld radiation conditions are imposed on $u$ and $\Delta u$ as follows:
\begin{align}\label{eq:Sommerfeld}
	\lim\limits_{r\to\infty}\sqrt{r}(\partial_ru-\mathrm{i}ku)=0,\quad\lim\limits_{r\to\infty}\sqrt{r}(\partial_r\Delta u-\mathrm{i}k\Delta u)=0, \quad r=|x|,
\end{align}
which hold uniformly in all directions. 

Let $\Phi_k(x, y)$ be the fundamental solution to the biharmonic wave equation, i.e.,
\begin{align}\label{eq:Phi}
	\Phi_k(x, y)=\frac{\mathrm{i}}{8k^2}\left(H_0^{(1)}(k|x-y|)-H_0^{(1)}(\mathrm{i}k|x-y|)\right),
\end{align}
where $H_0^{(1)}$ is the Hankel function of the first kind and order zero. Then it is known that there exists a unique solution $u$ to \cref{eq: biharmonic} and \cref{eq:Sommerfeld}, explicitly, 
\begin{align}\label{eq: solution}
	u(x, k)=\int_{V_0}\Phi_k(x, y)S(y)\mathrm{d}y.
\end{align}

Choose some acquisition curve $\Gamma_R:=\{x\in\mathbb{R}^2:|x|=R\}$ such that $V_0\subset B_R:=\{x\in\mathbb{R}^2:|x|<R\}.$ Let $N\in\mathbb{N}_+$ and $\mathbb{K}_N$ be an admissible set consisting of a finite number of wave numbers, then the inverse problem under consideration can be formulated by:
\begin{problem}[Phaseless multifrequency inverse source problem]\label{pro: problem}
	Recover $S(x)$ from the multi-frequency phaseless data 
	$$
	\mathbb{D}_{\rm{PL}}:=\left\{|u(x, k)|,\ |\Delta u(x, k)|:x\in\Gamma_R,\ k\in\mathbb{K}_N\right\}.
	$$
\end{problem}

It is obvious that both $S$ and $-S$ produce the identical intensities
$$
\left|\int_{V_0}\Phi_k(x, y)S(y)\mathrm{d}y\right|,\quad\left|\int_{V_0}\Delta_x\Phi_k(x, y)S(y)\mathrm{d}y\right|,\quad\forall k\in\mathbb{R}_+,\ x\in\mathbb{R}^2,
$$
which illustrates that the source $S$ can not be uniquely determined from the phaseless data $\mathbb{D}_{\rm PL}.$ 
Thus, to tackle this dilemma, we develop a two-stage numerical method to deal with Problem \ref{pro: problem} by reformulating it into the following two inverse problems:

\begin{problem}[Phase retrieval]\label{pro:phaseRetrieval}
	Reconstruct the phase information from 
	$\mathbb{D}_{\rm{PL}}$ to obtain the radiated field data 
	$$\mathbb{D}_{\rm{P}}=
	\{u(x, k),\Delta u(x, k):x\in\Gamma_R,\ k\in\mathbb{K}_N\}.
	$$ 
\end{problem}

\begin{problem}[Multifrequency inverse source problem]\label{pro: ISP} 
	Reconstruct the source function $S$ from $\mathbb{D}_{\rm{P}}$.
\end{problem}

In the practical implementation, we first focus on \cref{pro:phaseRetrieval} to retrieve the phase information and then turn to Problem \ref{pro: ISP} to recover the source function.  Nevertheless, the aim of \cref{pro:phaseRetrieval} is to provide full data for \cref{pro: ISP}. Thus, we first give a brief description of the Fourier method for solving \cref{pro: ISP} under the assumption that $\mathbb{D}_{\rm{P}}$ is available. \Cref{pro:phaseRetrieval} will be considered in the next section.

\subsection{Fourier method}

We recall here the key rudiments of the Fourier method (see \cite{CGYZ23, IP15} for more details). The basic idea is to approximate the source function $S$ through the following Fourier expansion 
\begin{align}\label{eq:SN}
	S_N(x):=\sum_{|\bm{l}|_\infty\le N}\hat{s}_{\bm{l}}\phi_{\bm l}(x), \quad {\bm l}\in\mathbb{Z}^2,
\end{align}
where $\hat{s}_{\bm l}$ are the Fourier coefficients, and 
\[
\phi_{\bm l}(x)=\mathrm{e}^{\mathrm{i}\frac{\pi}{a}{\bm l}\cdot x},\quad {\bm l}\in\mathbb{Z}^2,
\]
is the Fourier basis function. Once $\hat{s}_{\bm l}$ is determined by $\mathbb{D}_{\rm{P}},$ the reconstructed source function $S_N$ can be obtained correspondingly. As a result, in the rest of this subsection, we are devoted to computing $\hat{s}_{\bm l}$. To start with, we recall the definition of admissible frequencies (see, \cite{CGYZ23}).

\begin{definition}[Admissible wave numbers,]\label{def:k}
	For $N\in\mathbb{N}_+,$ we define the admissible wave numbers as follows:
	$$
	\mathbb{K}_N:=\left\{\frac{\pi}{a}|\bm l|:{\bm l}\in\mathbb{Z}^2,1\le|\bm l|_\infty\le N\right\}\cup\{k_0\},
	$$
	where $k_0>0$ is a small wave number satisfying $0<k_0<1/R.$
\end{definition}

In the following, we present the formulae to compute the Fourier coefficients $\hat{s}_{\bm l},\,|\bm l|_\infty\le N.$

Let $\nu_\rho$ be the unit outward normal to $\Gamma_\rho:=\{x\in\mathbb{R}^2:|x|=\rho\}$ with $\rho>R.$ Given $\mathbb{D}_{\rm{P}}$, we introduce two auxiliary functions
$$
u_H=-\frac{1}{2k^2}\left(\Delta u-k^2u\right),\quad u_M=\frac{1}{2k^2}\left(\Delta u+k^2u\right),
$$
which satisfy
$$
u=u_H+u_M,\quad\Delta u=k^2(u_M-u_H).
$$

Under the polar coordinate $(r, \theta):x=r(\cos\theta, \sin\theta)$, we consider the following series expansions for $x\in\Gamma_\rho:$
\begin{align*}
	&w_H(x, k)=\sum_{n\in\mathbb{Z}}\frac{H_n^{(1)}(k\rho)}{H_n^{(1)}(kR)}\hat{u}_{k, n}^{H}\mathrm{e}^{\mathrm{i}n\theta},\quad 
	w_M(x, k)=\sum_{n\in\mathbb{Z}}\frac{K_n(k\rho)}{K_n(kR)}\hat{u}_{k, n}^{M}\mathrm{e}^{\mathrm{i}n\theta},\\
	&\partial_{\nu_\rho}w_H(x, k)=\sum_{n\in\mathbb{Z}}k\frac{{H_n^{(1)}}'(k\rho)}{H_n^{(1)}(kR)}\hat{u}_{k, n}^{H}\mathrm{e}^{\mathrm{i}n\theta},\quad 
	\partial_{\nu_\rho}w_M(x, k)=\sum_{n\in\mathbb{Z}}k\frac{{K'_n}(k\rho)}{K_n(kR)}\hat{u}_{k, n}^{M}\mathrm{e}^{\mathrm{i}n\theta},
\end{align*}
where $H_n^{(1)}$ and $K_n$ are the first-kind Hankel function and second-kind modified Bessel function of order $n,$ respectively, and the Fourier coefficients $\hat{u}_{k, n}^{\xi},\,\xi\in\{H, M\}$ can be computed by
\[
\hat{u}_{k, n}^{\xi}=\frac{1}{2\pi}\int_0^{2\pi}u_\xi(R,\theta; k)\mathrm{e}^{-\mathrm{i}n\theta}\,\mathrm{d}\theta.
\]

By letting $\lambda\in(0,\frac{a}{2\pi}), {\bm{l}_0}=(\lambda,0), k_0=\pi\lambda/a$, and
\begin{align*}
	w&=w_H+w_M, \quad \partial_{\nu_\rho}w=\partial_{\nu_\rho}w_H+\partial_{\nu_\rho}w_M,\\
	\Delta w&=k^2(w_M-w_H), \quad \partial_{\nu_\rho}\Delta w=k^2(\partial_{\nu_\rho}w_M-\partial_{\nu_\rho}w_H),
\end{align*}
the Fourier coefficients can be calculated via the following integral identities:                                      
\begin{align}\nonumber
	\hat{s}_{\bm l}&=\frac{1}{a^2}\int_{\Gamma_\rho}\left(\partial_{\nu_\rho}\Delta w(x, k)+\mathrm{i}\frac{2\pi}{a}({\bm l}\cdot\nu_\rho)\Delta w(x, k)\right)\overline{\phi_{\bm l}(x)}\,\mathrm{d}s(x)\\
	&\quad-\frac{k^2}{a^2}\int_{\Gamma_\rho}\left(\partial_{\nu_\rho}w(x, k)+\mathrm{i}\frac{2\pi}{a}({\bm l}\cdot \nu_\rho)w(x, k)\right)\overline{\phi_{\bm l}(x)}\,\mathrm{d}s(x), \quad k\in\mathbb{K}_N\backslash\{k_0\},\label{eq:sl}
\end{align}
and
\begin{align}\nonumber
	\hat{s}_{\bm{l}_0} & =\frac{1}{a^2\mathrm{sinc}(\lambda\pi)}\int_{\Gamma_\rho}\left(\partial_{\nu_\rho}\Delta w(x, k_0)+\mathrm{i}\frac{2\pi}{a}({\bm{l}}_0\cdot{\nu_\rho})\Delta w(x, k_0)\right)\overline{ \phi_{\bm{l}_0}(x)}\,\mathrm{d} s(x) \\
	\nonumber
	&\quad -\frac{k_0^2}{a^2\mathrm{sinc}(\lambda\pi)}\int_{\Gamma_\rho}\left(\partial_{\nu_\rho} w(x, k_0)+\mathrm{i}\frac{2\pi}{a}({\bm{l}}_0\cdot{\nu_\rho}) w(x, k_0)\right)\overline{ \phi_{\bm{l}_0}(x)}\,\mathrm{d} s(x) \\
	&\quad -\frac{1}{a^2\mathrm{sinc}(\lambda\pi)}\sum_{1\le|{\bm l}|_\infty\le N}\hat{s}_{\bm l}\int_{V_0} \phi_{\bm l}(x)\overline{ \phi_{\bm{l}_0}(x)}\,\mathrm{d} x,
	\label{eq:s0}
\end{align}
where $\mathrm{sinc}(x)=\sin x/x$.

By taking the ill-posedness of the inverse source problem into account, we formulate the Fourier method in Algorithm \ref{alg: Fourier} from the noisy data $\mathbb{D}_{\rm{P}}^\varepsilon,$ by replacing the radiated field data $u(x, k)$ and $\Delta u(x, k)$ in $\mathbb{D}_{\rm{P}}$ with their noisy measurements $u^\varepsilon(x, k)$ and $\Delta u^\varepsilon(x, k).$ Here and in what follows, we use the  superscript ``$\varepsilon$'' to indicate the dependence on the noise level $\varepsilon.$ We refer to \cite{CGYZ23} for further implementational details.
\begin{algorithm}
	\begin{algorithmic}
		\STATE{\textbf{Step~1} Choose the parameters $\lambda,\rho, N$ and the admissible frequencies $\mathbb{K}_N$;}\\
		\STATE{\textbf{Step~2} Collect/evaluate the multi-frequency noisy data $\mathbb{D}_{\rm{P}}^\varepsilon;$}\\
		\STATE{\textbf{Step~3} Evaluate the Cauchy data $u^\varepsilon,$ $\partial_{\nu_\rho}u^\varepsilon,$ $\Delta u^\varepsilon$ and $\partial_{\nu_\rho}\Delta u^\varepsilon$ on $\Gamma_\rho.$}\\
		\STATE{\textbf{Step~4} Compute the Fourier coefficients $\hat{s}^\varepsilon_{\bm{l}_0}$ and $\hat{s}_{\bm l}^\varepsilon(1\le|\bm l|_\infty\le N)$ by \cref{eq:sl}--\cref{eq:s0};}\\
		\STATE{\textbf{Step~5} Define the truncated Fourier expansion as \cref{eq:SN} by taking $\hat{s}_{\bm l}$ to be $\hat{s}_{\bm l}^\varepsilon,$ for $|\bm l|_\infty\le N$;}\\
		\STATE{\textbf{Step~6}  We take the source $S_N^\varepsilon$ as an approximate to $S.$}
		\caption{Fourier method to reconstruct $S.$}\label{alg: Fourier}
	\end{algorithmic}
\end{algorithm}

\section{Phase retrieval by auxiliary sources}

In this section, we shall develop a numerical method to retrieve the phase by introducing several reference sources to the inverse source setup. The core idea of this technique is to incorporate auxiliary point sources for each receiver to supplement the missing information. It deserves noting that, if only a single extra point source is added to the inversion system,  then the radiated fields emitted respectively by the unknown source and the known auxiliary source are still coupled intricately in the phaseless measurements. Magically, this difficulty can be overcome skillfully once two auxiliary source points are appropriately integrated into the system. Meanwhile, the missing phase information can be also recovered accurately provided the measurement noise is sufficiently small.

To elaborate on the phase retrieval technique, we first define several notations and definitions. Given $m>0,$ we split $B_R$ and $\Gamma_R$ into $m$ parts, i.e.,
\begin{align*}
	B_R=\mathop{\cup}_{j=1}^mB_j,\quad\Gamma_R=\mathop{\cup}_{j=1}^m\Gamma_j,
\end{align*}
with
\begin{align*}
	&B_j:=\{r(\cos\vartheta,\sin\vartheta):0\le r\le R,\vartheta_{2j-2}\le\vartheta\le\vartheta_{2j}\},\quad j=1,\cdots, m,\\
	&\Gamma_j:=\{R(\cos\vartheta,\sin\vartheta):\vartheta_{2j-2}\le\vartheta\le\vartheta_{2j}\},\quad  j=1,\cdots, m,
\end{align*}
and $\vartheta_j=j\pi/m$ for $j\in\mathbb{N}_+.$

For each $j(j=1,\cdots, m),$ we choose two real constants $\lambda_{j,\ell}$ such that $\sqrt{2}a/R\le|\lambda_{j,\ell}|<1$ for $\ell=1,2,$  and define two points $z_{j,\ell}:=\lambda_{j,\ell}R(\cos\vartheta_{2j-1},\sin\vartheta_{2j-1}).$  
To delicately balance the intensities between the target source and the artificial source, we also need the following scaling factors
$$	
c_{j,\ell, k}:=\frac{\|u(\cdot, k)\|_{j,\infty}}{\|\Phi_k(\cdot, z_{j,\ell})\|_{j,\infty}},\quad j=1,\cdots, m,\ \ell=1,2,
$$
with $\|\cdot\|_{j,\infty}:=\|\cdot\|_{L^\infty(\Gamma_j)}$. It can been seen that $\Psi_{j,\ell}(x, k):=-c_{j,\ell, k}\Phi_k(x, z_{j,\ell})$ satisfies the following inhomogeneous biharmonic equation
$$
\Delta^2\Psi_{j,\ell}-k^4\Psi_{j,\ell}=c_{j,\ell, k}\delta_{j,\ell},\quad \text{in }\mathbb{R}^2,
$$
where $\delta$ designates the Dirac distributions. Further, from the linearity of the biharmonic equation, one easily deduces that $v_{j,\ell}=u+\Psi_{j,\ell}$ is the unique solution to the problem
$$
\begin{cases}
	\Delta^2v_{j,\ell}-k^4v_{j,\ell}=S+c_{j,\ell, k}\delta_{j,\ell}, \quad \text{in }\mathbb{R}^2,\\
	\lim\limits_{r=|x|\to\infty}\sqrt{r}\left(\dfrac{\partial v_{j,\ell}}{\partial r}-\mathrm{i}k v_{j,\ell}\right)=0,\vspace{2mm}\\
	\lim\limits_{r=|x|\to\infty}\sqrt{r}\left(\dfrac{\partial \Delta v_{j,\ell}}{\partial r}-\mathrm{i}k \Delta v_{j,\ell}\right)=0.
\end{cases}
$$

We are now concerned with \cref{pro:phaseRetrieval}. For ease of notation, we denote $u_{j, k}(\cdot):=\left.u(\cdot, k)\right|_{\Gamma_j},$
for each $j=1,\cdots, m,$ and $k\in\mathbb{K}_N.$ Similarly, we also write $v_{j,\ell, k}(\cdot)=v_{j,\ell}(\cdot, k)|_{\Gamma_j}$ to indicate the dependence of $v_{j,\ell}$ on $k$.

Given the phaseless data $\left\{|\square u_{j, k}|,\,|\square v_{j,\ell, k}|:\square\in\{\mathcal{I},\Delta\},\ \ell=1,2, \ k\in\mathbb{K}_N\right\},$  we derive that
\begin{align}
	\label{eq:ujk} & \left(\Re \square u_{j, k}\right)^2+\left(\Im \square u_{j, k}\right)^2=|\square u_{j, k}|^2,\\
	\label{eq:vj1k} & \left(\Re \square u_{j, k}-c_{j,1,k}\Re \square \Phi_{j,1,k}\right)^2+
	\left(\Im \square u_{j, k}-c_{j,1,k}\Im \square \Phi_{j, 1, k}\right)=|\square v_{j,1,k}|^2,\\
	\label{eq:vj2k} & \left(\Re \square u_{j, k}-c_{j,2,k}\Re \square \Phi_{j,2,k}\right)^2+
	\left(\Im \square u_{j, k}-c_{j,2,k}\Im \square \Phi_{j, 2, k}\right)=|\square v_{j,2,k}|^2,
\end{align}
with $\square$ denoting the identity operator $\mathcal{I}$ or the Laplace operator $\Delta,$ depending on the data ($u(x, k)$ or $\Delta u(x, k)$) to be retrieved. In what follows, we may omit the identity operator $\mathcal{I}$ for the notational simplification in the case $\square=\mathcal{I}$.

For the coefficients in \cref{eq:vj1k} and \cref{eq:vj2k}, we compute from \cref{eq:Phi} explicitly that
\begin{align*}
	&\Re\Phi_k(x, y)= -\frac{1}{8k^2}\left(Y_0(k|x-y|)+H_0(k|x-y|) \right),\\
	&\Re\Delta_x\Phi_k(x, y)= \frac{1}{8}\left(Y_0(k|x-y|)-H_0(k|x-y|) \right),\\
	&\Im\Phi_k(x, y)=\frac{1}{8k^2}J_0(k|x-y|),\quad \Im\Delta_x\Phi_k(x, y)=-\frac{1}{8}J_0(k|x-y|),
\end{align*}
where we have denoted $H_0(t)=\mathrm{i}H_0^{(1)}(\mathrm{i}t).$
Subtracting \cref{eq:ujk} from  \cref{eq:vj1k} and \cref{eq:vj2k}, respectively, we get the following linear system concerning $\Re \square v_{j, k}$ and $\Im \square v_{j, k}$:
\begin{align}\label{eq: equation}
	\left\{
	\begin{aligned}
		A_{j,1,k}^\square\Re\square u_{j, k}+B_{j,1,k}^\square\Im\square v_{j, k}=f_{j,1,k}^\square,\\
		A_{j,2,k}^\square\Re\square u_{j, k}+B_{j,2,k}^\square\Im\square v_{j, k}=f_{j,2,k}^\square,
	\end{aligned}
	\right.
\end{align}
where, for $\ell=1,2,$
\begin{align}
	\label{eq:A} A_{j,\ell, k}^{\square} &=
	\begin{cases}
		\frac{1}{k^2}\left(Y_0(k r_{j,\ell})+H_0(k r_{j,\ell})\right), & \text{if}\ \square = \mathcal{I},\\
		H_0(k r_{j,\ell})-Y_0(k r_{j,\ell}), & \text{if}\ \square = \Delta,
	\end{cases} \\
	\label{eq:B} B_{j,\ell, k}^{\square} & =
	\begin{cases}
		-\frac{1}{k^2}J_0(k r_{j,\ell}),&  \text{if}\ \square = \mathcal{I},\\
		J_0(k r_{j,\ell}),&  \text{if}\ \square = \Delta,
	\end{cases}\\
	\label{eq:f} f_{j,\ell, k}^\square & =\frac{4}{c_{j,\ell, k}}\left(\left|\square v_{j,\ell, k}\right|^2-\left|\square u_{j, k}\right|^2\right)-\frac{c_{j,\ell, k}}{16}\left(\left|A_{j,\ell, k}^\square\right|^2+\left|B_{j,\ell, k}^\square\right|^2\right).
\end{align}
with $r_{j,\ell}=|x-z_{j,\ell}|,\ x\in\Gamma_j.$ 

Now a straightforward calculation retrieves the phased data:
\begin{equation}\label{eq:retrieve}
	\square u_{j, k}=\frac{\det D_{j, k}^{R,\square}}{\det D_{j, k}^\square}+\mathrm{i}\frac{\det D_{j, k}^{I,\square}}{\det D_{j, k}^\square},
\end{equation}
where
\begin{align*}
	D_{j, k}^\square=
	\begin{bmatrix}\vspace{+.2cm}
		A_{j,1,k}^\square & B_{j,1,k}^\square \\
		A_{j,2,k}^\square & B_{j,2,k}^\square
	\end{bmatrix},\quad
	D_{j, k}^{R,\square}=
	\begin{bmatrix}\vspace{+.2cm}
		f_{j,1,k}^\square & B_{j,1,k}^\square \\
		f_{j,2,k}^\square & B_{j,2,k}^\square
	\end{bmatrix},\quad
	D_{j, k}^{I,\square}=
	\begin{bmatrix}\vspace{+.2cm}
		A_{j,1,k}^\square & f_{j,1,k}^\square \\
		A_{j,2,k}^\square & f_{j,2,k}^\square
	\end{bmatrix}.
\end{align*}

To recover $\square u_{j, k}$ from the measurements $|\square u_{j, k}|,\,|\square v_{j,1,k}|,$ and  $|\square v_{j,2,k}|,$ we only need to solve \eqref{eq: equation} once  \eqref{eq: equation} is uniquely solvable. For this, the parameters $R, m$ and $R_{j,\ell}$ for $\ell=1,2$ are supposed to be chosen properly. In this paper, we take these parameters as follows:
\begin{equation}\label{eq: parameter}
	\begin{cases}
		m\ge 10,\quad\lambda_{j,1}=\dfrac{1}{2},\quad k_0=\dfrac{\pi}{30a},\quad \tau\ge 6,\vspace{2mm} \\ 
		\begin{cases}
			R=\tau a,\quad\lambda_{j,2}=\dfrac{1}{2}+\dfrac{\pi}{2kR},& \text{if }\ \ k\in\mathbb{K}_N\backslash\{k_0\},\vspace{2mm}\\
			R=6a,\quad\lambda_{j,2}=-\dfrac{3}{2},&\text{if }\ \ k=k_0.
		\end{cases}
	\end{cases}
\end{equation}

As shall be seen in the next section, under such choice of the parameters \cref{eq: parameter}, the equation system \cref{eq: equation} is uniquely solvable. Since the practically measured data is always polluted by noise, we finally formulate the phase retrieval algorithm with perturbed data in \cref{alg: PR}.

\begin{algorithm}
	\begin{algorithmic}
		\STATE{\textbf{Step~1} Taking the parameters $R, m$ and $\lambda_{j,\ell}$ for $j=1,\cdots, m,\ell=1,2$ as in \eqref{eq: parameter};}\\
		\STATE{\textbf{Step~2} Acquist the noisy phaseless data $\mathbb{D}_{\rm PL}^\varepsilon$ and compute the scaling factors $c_{j,\ell, k}^\varepsilon$ for $j=1,\cdots, m,\ell=1,2,$ and $k\in\mathbb{K}_N;$}\\
		\textbf{Step~3} For each $j=1,\cdots, m$ and $k\in\mathbb{K}_N$:\\
		\qquad\textbf{Step~3.1} Place two point sources $c_{j,\ell, k}^\varepsilon\delta_{j,\ell}$ to the inverse source system; \\
		\qquad\textbf{Step~3.2} For $\ell=1,2$, collect the phaseless data $|v_{j,\ell}^\varepsilon(x;k)|$, $|\Delta v_{j,\ell}^\varepsilon(x;k)|$ on $\Gamma_j$; \\
		\textbf{Step~4} Retrieve the phase by \cref{eq:retrieve} to obtain the radiated field $\{u^\varepsilon(x, k),\Delta u^\varepsilon(x, k):x\in\Gamma_j, k\in\mathbb{K}_N\}.$ 
		\caption{Phase retrieval algorithm for \cref{pro:phaseRetrieval}}\label{alg: PR}
	\end{algorithmic}
\end{algorithm}

\section{Stability of the phase retrieval}\label{sec: stability}

In this section, we are devoted to analyzing the stability of the phase retrieval algorithm proposed in Algorithm \ref{alg: PR}. 
We start with an estimate on the modified Bessel function $K_0$, which will play a key role in the later analysis.

\begin{lemma}\label{lem: K}
	For $x>0,$ it holds that
	$$
	\left|K_0(x)-\sqrt{\frac{\pi}{2x}}\mathrm{e}^{-x}\right|\le\frac{\sqrt{\pi}x^{-3/2}}{8\sqrt{2}}.
	$$
\end{lemma}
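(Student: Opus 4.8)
The plan is to obtain the estimate from the classical integral representation of the modified Bessel function
$$
K_0(x)=\int_0^\infty \mathrm{e}^{-x\cosh t}\,\mathrm{d}t,\qquad x>0,
$$
and to extract from it both the leading term $\sqrt{\pi/(2x)}\,\mathrm{e}^{-x}$ and a quantitative bound on the remainder. First I would substitute $\cosh t = 1+s$ (equivalently introduce a variable measuring the deviation of $\cosh t$ from its minimum value $1$), which turns the representation into
$$
K_0(x)=\mathrm{e}^{-x}\int_0^\infty \mathrm{e}^{-xs}\,\frac{\mathrm{d}s}{\sqrt{s^2+2s}}
=\mathrm{e}^{-x}\int_0^\infty \mathrm{e}^{-xs}\,\frac{1}{\sqrt{2s}}\,\frac{\mathrm{d}s}{\sqrt{1+s/2}}.
$$
The factor $(1+s/2)^{-1/2}$ is what produces the full asymptotic series; the leading term comes from replacing it by $1$, since $\int_0^\infty \mathrm{e}^{-xs}(2s)^{-1/2}\,\mathrm{d}s=\sqrt{\pi/(2x)}$.

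The core of the argument is then to bound the error committed in that replacement. Using $0\le 1-(1+s/2)^{-1/2}\le s/4$ for $s\ge 0$ (an elementary inequality, from $(1+u)^{-1/2}\ge 1-u/2$ with $u=s/2$), I get
$$
0\le \sqrt{\frac{\pi}{2x}}\mathrm{e}^{-x}-K_0(x)
=\mathrm{e}^{-x}\int_0^\infty \mathrm{e}^{-xs}\,\frac{1-(1+s/2)^{-1/2}}{\sqrt{2s}}\,\mathrm{d}s
\le \frac{\mathrm{e}^{-x}}{4\sqrt{2}}\int_0^\infty \mathrm{e}^{-xs}\sqrt{s}\,\mathrm{d}s.
$$
Since $\int_0^\infty \mathrm{e}^{-xs}s^{1/2}\,\mathrm{d}s=\Gamma(3/2)\,x^{-3/2}=\tfrac{\sqrt{\pi}}{2}x^{-3/2}$, the right-hand side equals $\frac{\sqrt{\pi}}{8\sqrt{2}}x^{-3/2}\mathrm{e}^{-x}\le \frac{\sqrt{\pi}}{8\sqrt{2}}x^{-3/2}$, which is exactly the claimed bound (indeed slightly stronger, with the extra decaying factor $\mathrm{e}^{-x}$). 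I would state the result with the clean bound in the lemma and possibly remark on the sharper $\mathrm{e}^{-x}$-weighted version if it is used later.

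The only genuinely delicate point is the elementary convexity inequality $1-(1+u)^{-1/2}\le u/2$ and making sure the substitution $\cosh t-1=s$, $\mathrm{d}t=\mathrm{d}s/\sqrt{s^2+2s}$ is justified (the integrand is integrable near $s=0$ because $1/\sqrt{s^2+2s}\sim 1/\sqrt{2s}$, and the $t\to 0$ endpoint is where $s\to 0$). Everything else is a one-line Gamma-function computation. An alternative route, if one prefers not to invoke the integral representation, is to use the known asymptotic expansion $K_0(x)=\sqrt{\pi/(2x)}\,\mathrm{e}^{-x}\bigl(1-\tfrac{1}{8x}+O(x^{-2})\bigr)$ together with an explicit error bound for the truncated asymptotic series (available in standard references such as the DLMF), but the integral-representation argument above is self-contained and yields the stated constant directly, so that is the approach I would take.
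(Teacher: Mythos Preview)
Your proof is correct and follows essentially the same route as the paper: both reduce to the integral $e^{-x}\int_0^\infty e^{-xs}(2s)^{-1/2}(1+s/2)^{-1/2}\,ds$ and bound the error from replacing $(1+s/2)^{-1/2}$ by $1$ via the inequality $1-(1+u)^{-1/2}\le u/2$, then evaluate $\int_0^\infty e^{-xs}s^{1/2}\,ds=\Gamma(3/2)\,x^{-3/2}$ to obtain exactly $\frac{\sqrt{\pi}}{8\sqrt{2}}x^{-3/2}e^{-x}$. The only cosmetic difference is that the paper quotes the integral representation already in the processed form (citing Arfken) and writes the inequality as a Taylor remainder $(1+u)^{-1/2}=1-\tfrac{u}{2}\int_0^1(1+us)^{-3/2}\,ds$, whereas you derive that form from $K_0(x)=\int_0^\infty e^{-x\cosh t}\,dt$ by the substitution $s=\cosh t-1$.
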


\begin{proof}
	From the integral representation of $K_0(x)$ \cite[Chapter~14(14.131)]{Arfken} under the assumption $x>0,$ we know 
	\begin{equation}\label{eq:K0integral}
		K_0(x)=\sqrt{\frac{\pi}{2x}}\frac{\mathrm{e}^{-x}}{\Gamma(\frac{1}{2})}\int_0^\infty\mathrm{e}^{-t}t^{-1/2}\left(1+\frac{t}{2x}\right)^{-1/2}\mathrm{d}t,
	\end{equation}
	where $\Gamma(\cdot)$ is the Gamma function 
	$$
	\Gamma(t):=\int_0^\infty\mathrm{e}^{-s}s^{t-1}\,\mathrm{d}s.
	$$
	
	Because
	$$
	\left(1+\frac{t}{2x}\right)^{-1/2}=1-\frac{t}{4x}\int_0^1\left(1+\frac{t s}{2x}\right)^{-3/2}\mathrm{d}s,
	$$
	we rewrite \eqref{eq:K0integral} as
	\begin{align*}
		K_0(x)=\sqrt{\frac{\pi}{2x}}\mathrm{e}^{-x}-\sqrt{\frac{\pi}{2x}}\frac{\mathrm{e}^{-x}}{\Gamma(\frac{1}{2})}\int_0^\infty\mathrm{e}^{-t}\frac{t^{1/2}}{4x}\,\mathrm{d}t\int_0^1\left(1+\frac{t s}{2x}\right)^{-3/2}\mathrm{d}s.
	\end{align*}
	
	Since $t\ge 0, x>0,$ it holds that
	\begin{align*}
		\left|K_0(x)-\sqrt{\frac{\pi}{2x}}\mathrm{e}^{-x}\right| & \le\sqrt{\frac{\pi}{2x}}\frac{\mathrm{e}^{-x}}{4\Gamma(\frac{1}{2})x}\int_0^\infty\mathrm{e}^{-t}{t^{1/2}}\,\mathrm{d}t\int_0^1\left|1+\frac{t s}{2x}\right|^{-3/2}\mathrm{d}s \\
		&\le\sqrt{\frac{\pi}{2x}}\frac{\mathrm{e}^{-x}}{4\Gamma(\frac{1}{2})x}\Gamma\left(\frac{3}{2}\right)\\
		&\le\sqrt{\frac{\pi}{2x}}\frac{\mathrm{e}^{-x}}{8\Gamma(\frac{1}{2})x}\Gamma\left(\frac{1}{2}\right)\\
		&\le\frac{\sqrt{\pi}}{8\sqrt{2}}x^{-3/2}\mathrm{e}^{-x}\\
		&\le\frac{\sqrt{\pi}x^{-3/2}}{8\sqrt{2}},
	\end{align*}
	which completes the proof.
\end{proof}

\begin{lemma}\label{lem:H01}\cite{IP18}
	Let $x>0,$ then the following estimate holds
	$$
	\left|H_0^{(1)}(x)-\sqrt{\frac{2}{\pi x}}\mathrm{e}^{\mathrm{i}(x-\pi/4)}\right|\le\frac{x^{-3/2}}{4\sqrt{2\pi}}.
	$$
\end{lemma}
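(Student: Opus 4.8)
Since this lemma is quoted from \cite{IP18}, one option is simply to cite it; but a self-contained derivation runs exactly parallel to the proof of Lemma~\ref{lem: K} just given, and I would present it that way. The plan is to start from the Poisson–type integral representation of the Hankel function, namely that for $x>0$
$$
H_0^{(1)}(x)=\sqrt{\frac{2}{\pi x}}\,\frac{\mathrm{e}^{\mathrm{i}(x-\pi/4)}}{\Gamma(\frac12)}\int_0^\infty\mathrm{e}^{-t}t^{-1/2}\left(1+\frac{\mathrm{i}t}{2x}\right)^{-1/2}\mathrm{d}t,
$$
which is the analytic continuation of the representation \eqref{eq:K0integral} used for $K_0$ (indeed $K_0$ and $H_0^{(1)}$ are related by $K_0(x)=\tfrac{\pi\mathrm{i}}{2}H_0^{(1)}(\mathrm{i}x)$, so the two integral formulas are the same object restricted to different rays). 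First I would record this formula, citing the same reference family as for $K_0$, and check that the integrand is absolutely integrable for every $x>0$, which is immediate since $\bigl|1+\mathrm{i}t/(2x)\bigr|\ge1$ and $\mathrm{e}^{-t}$ decays.

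Next, exactly as in Lemma~\ref{lem: K}, I would peel off the leading term using
$$
\left(1+\frac{\mathrm{i}t}{2x}\right)^{-1/2}=1-\frac{\mathrm{i}t}{4x}\int_0^1\left(1+\frac{\mathrm{i}ts}{2x}\right)^{-3/2}\mathrm{d}s,
$$
which gives
\begin{align*}
H_0^{(1)}(x)-\sqrt{\frac{2}{\pi x}}\mathrm{e}^{\mathrm{i}(x-\pi/4)}
&=-\sqrt{\frac{2}{\pi x}}\,\frac{\mathrm{e}^{\mathrm{i}(x-\pi/4)}}{\Gamma(\frac12)}\,\frac{\mathrm{i}}{4x}\int_0^\infty\mathrm{e}^{-t}t^{1/2}\,\mathrm{d}t\int_0^1\left(1+\frac{\mathrm{i}ts}{2x}\right)^{-3/2}\mathrm{d}s.
\end{align*}
Now I take absolute values. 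The crucial observation, which replaces the trivial bound $|1+ts/(2x)|^{-3/2}\le1$ available in the real case of Lemma~\ref{lem: K}, is that for real $\theta\ge0$ one has $|1+\mathrm{i}\theta|=\sqrt{1+\theta^2}\ge1$, hence $\bigl|1+\mathrm{i}ts/(2x)\bigr|^{-3/2}\le1$ for all $t,s\ge0$ and $x>0$. Combining this with $\bigl|\mathrm{e}^{\mathrm{i}(x-\pi/4)}\bigr|=1$ and $\int_0^\infty\mathrm{e}^{-t}t^{1/2}\,\mathrm{d}t=\Gamma(\tfrac32)=\tfrac12\Gamma(\tfrac12)$, the right-hand side collapses to $\sqrt{\tfrac{2}{\pi x}}\cdot\tfrac{1}{8x}=\tfrac{1}{4\sqrt{2\pi}}x^{-3/2}$, which is precisely the claimed bound; note the constant comes out sharp with this argument.

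I do not expect a real obstacle here: the only point requiring care is the justification of the complex integral representation, i.e. that the contour rotation from the $K_0$ formula \eqref{eq:K0integral} is legitimate uniformly in $x>0$ — but this follows from absolute convergence of the integrand on the relevant ray together with a standard Cauchy's-theorem / analytic-continuation argument, both sides being analytic in $x$ on the right half-line. Everything else is the same elementary one-step estimate as in Lemma~\ref{lem: K}. If one prefers to bypass the integral representation, the estimate is in any case exactly the statement proved in \cite{IP18}; the derivation above merely has the advantage of being uniform in $x>0$ and of exhibiting why the constant $\tfrac{1}{4\sqrt{2\pi}}$ arises.
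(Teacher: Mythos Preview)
The paper does not supply its own proof of this lemma; it is stated with a citation to \cite{IP18} and used as a black box. Your self-contained derivation is correct: the Hankel integral representation you invoke is standard, the one-term Taylor identity $(1+w)^{-1/2}=1-\tfrac{w}{2}\int_0^1(1+ws)^{-3/2}\,\mathrm{d}s$ is valid along the segment $w=\mathrm{i}t/(2x)$ since the principal branch is holomorphic away from $(-\infty,-1]$, the bound $|1+\mathrm{i}\theta|^{-3/2}\le1$ is immediate, and the constants combine exactly to $\tfrac{1}{4\sqrt{2\pi}}$. The argument is a clean mirror of the paper's proof of Lemma~\ref{lem: K}, which is a pleasant unification; in effect you have supplied what the paper chose to outsource.
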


In terms of \Cref{lem: K} and $\pi H_0(x)=2 K_0(x)$, we immediately have 
\begin{equation}\label{eq:errorH0}
	\left|H_0(x)-\sqrt{\frac{2}{\pi x}}\mathrm{e}^{-x}\right|\le\frac{x^{-3/2}}{4\sqrt{2\pi}},\quad \forall x>0.
\end{equation}

Based on the above two lemmas, we are now ready to give an estimate on $|\det(D^\square_{j, k})|,$ which characterizes the invertibility of  
matrices $D^\square_{j, k}$.
\begin{theorem}
	Choose the parameters as defined in \eqref{eq: parameter} and let
	$$
	p=\begin{cases}
		5, & \text{if }\ \square = \mathcal{I}, \\
		1, & \text{if }\ \square = \Delta,
	\end{cases}
	$$ 
	then there exists a constant $C>0$, such that
	\begin{equation}\label{eq: bound}
		\left|\det \left(D_{j, k}^\square\right)\right|\ge C k^{-p}, \quad \forall k\in \mathbb{K}_N.
	\end{equation}
\end{theorem}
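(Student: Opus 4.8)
The plan is to evaluate $\det(D_{j,k}^\square) = A_{j,1,k}^\square B_{j,2,k}^\square - A_{j,2,k}^\square B_{j,1,k}^\square$ asymptotically in $k$ by replacing the Bessel functions $Y_0, J_0, H_0$ with their leading-order oscillatory/exponential asymptotics and controlling the remainders. First I would write $H_0^{(1)} = J_0 + \mathrm{i} Y_0$ so that, by \Cref{lem:H01}, both $J_0(t)$ and $Y_0(t)$ equal $\sqrt{2/(\pi t)}$ times $\cos(t-\pi/4)$ and $\sin(t-\pi/4)$ respectively, up to an error of order $t^{-3/2}$; similarly $H_0(t) = \sqrt{2/(\pi t)}\,\mathrm{e}^{-t} + O(t^{-3/2})$ by \eqref{eq:errorH0}. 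With $r_{j,1} = |x - z_{j,1}|$ and $r_{j,2} = |x - z_{j,2}|$ where $x \in \Gamma_j$ and $z_{j,\ell}$ lies on the ray through the midpoint angle $\vartheta_{2j-1}$, the key geometric observation is that the parameter choice \eqref{eq: parameter} forces $k(r_{j,2} - r_{j,1})$ to be close to $\pi/2$: indeed with $\lambda_{j,1} = 1/2$ and $\lambda_{j,2} = 1/2 + \pi/(2kR)$ the two points differ by $\tfrac{\pi}{2kR}R = \tfrac{\pi}{2k}$ along that ray, and since $x$ is angularly close to $\vartheta_{2j-1}$ (because $m \ge 10$ makes each arc $\Gamma_j$ narrow) the radial distances inherit roughly the same difference, so $kr_{j,2} \approx kr_{j,1} + \pi/2$.

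Next I would substitute these asymptotics into the determinant. In the case $\square = \Delta$ one has $D_{j,k}^\Delta = (H_0(kr_{j,1}) - Y_0(kr_{j,1}))J_0(kr_{j,2}) - (H_0(kr_{j,2}) - Y_0(kr_{j,2}))J_0(kr_{j,1})$. The exponentially small $H_0$ terms contribute negligibly for large $kR$, so the leading part is $Y_0(kr_{j,2})J_0(kr_{j,1}) - Y_0(kr_{j,1})J_0(kr_{j,2})$, which by the asymptotics equals $\tfrac{2}{\pi k \sqrt{r_{j,1} r_{j,2}}}\sin\!\big(k(r_{j,2}-r_{j,1})\big)$ up to lower-order terms. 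The phase shift $k(r_{j,2}-r_{j,1}) \approx \pi/2$ makes the sine close to $1$, giving a lower bound of order $k^{-1}$ after using $r_{j,\ell} \le R + |z_{j,\ell}| \lesssim R$ to bound the prefactor from below; this matches $p = 1$. For $\square = \mathcal{I}$ the matrix entries carry an extra factor $1/k^2$ in both $A$ and $B$, so $\det(D_{j,k}^{\mathcal I}) = k^{-4}\big[(Y_0+H_0)(kr_{j,1})(-J_0(kr_{j,2})) - (Y_0+H_0)(kr_{j,2})(-J_0(kr_{j,1}))\big]$, whose bracket is again of order $k^{-1}$, yielding $k^{-5}$ and $p = 5$. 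For the exceptional frequency $k = k_0$ the parameters $R = 6a$, $\lambda_{j,2} = -3/2$ are used instead; here $k_0 = \pi/(30a)$ is small and fixed, so $kr_{j,\ell}$ stays bounded and one simply needs a nonvanishing determinant at a finite set of configurations, which can be checked directly (the points $z_{j,1}$, $z_{j,2}$ lie on opposite sides of the arc's midpoint, so the arguments $r_{j,1}, r_{j,2}$ are genuinely distinct and the Wronskian-type combination of $J_0, Y_0$ does not degenerate).

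The main obstacle will be making the informal statement ``$kr_{j,2} \approx kr_{j,1} + \pi/2$'' rigorous and quantitative: I need an explicit bound on $|k(r_{j,2}-r_{j,1}) - \pi/2|$ valid uniformly over $x \in \Gamma_j$ and over all admissible $k$, and then to check that this deviation (together with the $O(t^{-3/2})$ Bessel remainders and the exponentially small $H_0$ contributions) is small enough that $\sin(k(r_{j,2}-r_{j,1}))$ stays bounded away from zero — this is exactly where the specific constraints $m \ge 10$, $\tau \ge 6$, $\sqrt{2}a/R \le |\lambda_{j,\ell}| < 1$ must be invoked with care. Concretely, writing $x = R(\cos\vartheta,\sin\vartheta)$ with $|\vartheta - \vartheta_{2j-1}| \le \pi/(2m)$ and $z_{j,\ell} = \lambda_{j,\ell}R(\cos\vartheta_{2j-1},\sin\vartheta_{2j-1})$, I would expand $r_{j,\ell}^2 = R^2 + \lambda_{j,\ell}^2 R^2 - 2\lambda_{j,\ell}R^2\cos(\vartheta - \vartheta_{2j-1})$ and Taylor-expand in the small angular deviation to show the cross term $\cos(\vartheta-\vartheta_{2j-1})$ is $1 + O(1/m^2)$, then track how the $O(\pi/(2kR))$ difference in $\lambda$ propagates to $r_{j,\ell}$. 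The bookkeeping is delicate but elementary; once the phase estimate is secured, assembling the lower bound on $|\det(D_{j,k}^\square)|$ is routine, and the constant $C$ depends only on $a$, $\tau$, $m$ and not on $k$ or $j$.
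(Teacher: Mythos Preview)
Your plan matches the paper's proof almost exactly for $k \in \mathbb{K}_N \setminus \{k_0\}$: substitute the large-argument asymptotics of $J_0,Y_0,H_0$, extract the leading $\sin\big(k(r_{j,2}-r_{j,1})\big)$ term (with the extra $k^{-4}$ when $\square=\mathcal{I}$), bound the remainder, and invoke the phase estimate --- the paper quotes $0.3363\pi \le k(r_{j,1}-r_{j,2}) \le \pi/2$ from \cite{IP18}, which is precisely what your law-of-cosines / Taylor-expansion plan would reproduce.

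The one place that needs sharpening is $k=k_0$. It is not a ``finite set of configurations'': $x$ still ranges over the whole arc $\Gamma_j$, so you need a uniform lower bound, and the large-argument asymptotics are unavailable since $k_0 r_{j,\ell} < 2$. The paper therefore switches to the \emph{small-argument} power series
\[
J_0(t)=1-\tfrac{t^2}{4}+O(t^4),\qquad Y_0(t)=\tfrac{2}{\pi}\Big(1-\tfrac{t^2}{4}\Big)\big(\ln\tfrac{t}{2}+C_0\big)+\tfrac{t^2}{2\pi}+O(t^3),
\]
which isolates a leading term $\tfrac{2}{\pi}\big(1-\tfrac{t_1^2}{4}\big)\big(1-\tfrac{t_2^2}{4}\big)\ln(r_{j,2}/r_{j,1})+\tfrac{t_2^2-t_1^2}{2\pi}$, and then bounds all remainders (including the $J_0H_0$ cross terms) numerically over the explicit rectangle $t_1\in[0.314,0.346]$, $t_2\in[1.551,1.571]$ determined by the geometry. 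Your qualitative remark that the Wronskian-type combination ``does not degenerate'' is the right intuition, but the actual argument requires these explicit series and numerical ranges rather than a direct check.
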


\begin{proof}
	We only prove the estimate for $\left|\det(D_{j, k})\right|.$ The estimate on $|\det(D_{j, k}^\Delta)|$ can be obtained by a similar argument. The proof is divided into two cases.
	
	Case (i): $k\in\mathbb{K}_N\backslash\{k_0\}.$
	In this case, 
	$$ k\ge k_{\min}:=\min\limits_{k\in\mathbb{K}_N\backslash\{k_0\}}k=\frac{\pi}{a},$$
	
	and the parameters \cref{eq: parameter} imply that \cite{IP18}
	\begin{align}\label{eq:3.7}
		&\lambda_{j,2}\le\frac{2}{3},\quad k r_{j,1}\ge\frac{\tau\pi}{2},\quad k r_{j,2}>\frac{\tau\pi}{3},\\
		&0.3363\pi\le k(r_{j,1}-r_{j,2})\le \frac{\pi}{2}.\label{eq:3.9}
	\end{align}
	Denote 
	\begin{align}\label{eq:alphaJY}
		&\alpha_J(x):=J_0(x)-\sqrt{\frac{2}{\pi x}}\cos\left(x-\frac{\pi}{4}\right),\quad\alpha_Y(x):=Y_0(x)-\sqrt{\frac{2}{\pi x}}\sin\left(x-\frac{\pi}{4}\right),\\
		&\alpha_H(x):=H_0(x)-\sqrt{\frac{2}{\pi x}}\mathrm{e}^{-x}.\label{eq:alphaH}
	\end{align}
	From \Cref{lem:H01} and \eqref{eq:errorH0}, we know that
	\begin{align}\label{eq:alpha_l_bound}
		\left|\alpha_\ell(x)\right| & \le\frac{x^{-3/2}}{4\sqrt{2\pi}}, \quad\ell\in\{J,H,Y\},\quad\forall x>0.
	\end{align}
	
	Combining \eqref{eq:retrieve}, \eqref{eq:alphaJY}, \eqref{eq:alphaH} gives
	\begin{align*}
		\det \left(D_{j, k}\right) & =A_{j,1,k}B_{j,2,k}-A_{j,2,k}B_{j,1,k}\\
		&=\frac{1}{k^4}(J_0(s_1)(Y_0(s_2)+H_0(s_2))-J_0(s_2)(Y_0(s_1)+H_0(s_1)))\\
		&=\frac{2}{\pi k^5\sqrt{r_{j, 1}r_{j, 2}}}\sin(s_2-s_1)+\gamma(s_1, s_2),
	\end{align*}
	where $s_\ell=k r_{j,\ell}(\ell=1,2),$ and
	\begin{align*}
		\gamma(s_1,s_2)&=\frac{1}{k^4}\left\{
		\frac{2}{\pi\sqrt{s_1s_2}}\left(\mathrm{e}^{-s_2}\cos\left(s_1-\frac{\pi}{4}\right)-\mathrm{e}^{-s_1}\cos\left(s_2-\frac{\pi}{4}\right)\right)\right.\\
		&\quad\left.+\alpha_J(s_1)(\alpha_Y(s_2)+\alpha_H(s_2))-\alpha_J(s_2)(\alpha_Y(s_1)+\alpha_H(s_1))\right.\\
		&\quad\left.+\sqrt{\frac{2}{\pi s_2}}\alpha_J(s_1)\left(\sin\left(s_2-\frac{\pi}{4}\right)+\mathrm{e}^{-s_2}\right)\right.\\
		&\quad\left.-\sqrt{\frac{2}{\pi s_1}}\alpha_J(s_2)\left(\sin\left(s_1-\frac{\pi}{4}\right)+\mathrm{e}^{-s_1}\right)\right.\\
		&\quad\left.+\sqrt{\frac{2}{\pi s_1}}(\alpha_Y(s_2)+\alpha_H(s_2))\cos\left(s_1-\frac{\pi}{4}\right)\right.\\
		&\quad\left.-\sqrt{\frac{2}{\pi s_2}}\left(\alpha_Y(s_1)+\alpha_H(s_1)\right)\cos\left(s_2-\frac{\pi}{4}\right)\right\}.
	\end{align*}
	
	Using \cref{eq:3.7} and \cref{eq:alpha_l_bound}, we derive that
	\begin{align*}
		\left|\gamma(s_1,s_2)\right|&\le\frac{1}{k^4}\left(\frac{3}{\pi}s_1^{-3/2}s_2^{-1/2}+\frac{3}{\pi}s_1^{-1/2}s_2^{-3/2}+\frac{1}{8\pi}s_1^{-3/2}s_2^{-3/2}\right)\\
		&=\frac{s_1^{-1/2}s_2^{-1/2}}{\pi k^4}\left(3s_1^{-1}+3s_2^{-1}+\frac 1 8 s_1^{-1}s_2^{-1}\right)\\
		&\le\frac{1}{\pi k^5\sqrt{r_{j,1}r_{j,2}}}\left(\frac{6}{\tau\pi}+\frac{9}{\tau\pi}+\frac{3}{\tau^2\pi^2}\right)\\
		&=\frac{1}{\pi k^5\tau\sqrt{r_{j,1}r_{j,2}}}\left(\frac{6}{\pi}+\frac{9}{\pi}+\frac{3}{\pi^2}\right)\\
		&\approx\frac{1.2127}{\tau\pi k^5\sqrt{r_{j,1}r_{j,2}}}\\
		&\le\frac{3}{2\tau\pi k^5\sqrt{r_{j,1}r_{j,2}}}.
	\end{align*}
	This together with \eqref{eq:3.9}, $\lambda_{j,1}=1/2$ leads to
	\begin{align*}
		\left|\det(D_{j, k})\right|&\ge\frac{2}{\pi k^5\sqrt{r_{j,1}r_{j,2}}}\left|\sin\left(k(r_{j,1}-r_{j,2})\right)\right|-\left|\gamma(s_1,s_2)\right|\\
		&\ge\frac{1}{\pi k^5\sqrt{r_{j,1}r_{j,2}}}\left(2|\sin(k(r_{j,1}-r_{j,2}))|-\frac{3}{2\tau}\right)\\
		&\ge\frac{1}{\pi k^5\sqrt{r_{j,1}r_{j,2}}}\left(2|\sin(0.3363\pi)|-\frac{3}{2\tau}\right)\\
		&\ge \frac{1}{\pi k^5\sqrt{r_{j,1}r_{j,2}}}\left(1.74-\frac{3}{2\tau}\right)\\
		&\ge\frac{1}{\pi k^5R\sqrt{1+\lambda_{j,1}^2-2\lambda_{j,1}\cos\frac{\pi}{10}}}\left(1.74-\frac{3}{2\tau}\right)\\
		&\ge\frac{1}{1.717k^5R}\left(1.74-\frac{3}{2\tau}\right)\\
		&\ge\frac{M}{k^5R},
	\end{align*}
	with 
	\begin{equation}\label{eq: M}
		M=\frac{1}{2}\left(1.74-\frac{3}{2\tau}\right).
	\end{equation}
	
	Case (ii): $k=k_0=\frac{\pi}{30a}.$ We first recall that 
	\begin{align}\label{eq:J0}
		J_0(t)&=1-\frac{t^2}{4}+\widetilde{\alpha}_J(t),\\
		Y_0(t)&=\frac{2}{\pi}\left(1-\frac{t^2}{4}\right)\left(\ln\frac{t}{2}+C_0\right)+\frac{t^2}{2\pi}+\widetilde{\alpha}_Y(t),\label{eq:Y0}
	\end{align}
	with $C_0$ the Euler constant and
	\begin{align}\label{eq:alphaJY0}
		0<\widetilde{\alpha}_J(t)\le\frac{t^4}{64},\quad\left|\widetilde{\alpha}_Y(t)\right|\le\frac{t^3}{72}+\frac{t^4}{64},\quad0<t<2.
	\end{align}
	By letting $t_\ell=k_0r_{j,\ell}(\ell=1,2),$ we derive from \cref{eq:retrieve}, \cref{eq:J0}, and \cref{eq:Y0} that
	\begin{align}\nonumber
		\det\left(D_{j, k}\right)&=A_{j,1,k}B_{j,2,k}-A_{j,2,k}B_{j,1,k}\\ \nonumber
		&=\frac{1}{k_0^4}\Big(J_0(t_1)\left(Y_0(t_2)+H_0(t_2)\right)-J_0(t_2)\left(Y_0(t_1)+H_0(t_1)\right)\Big)\\\label{eq:detD}
		&=\frac{1}{k_0^4}\left(\frac{2}{\pi}\left(1-\frac{t_1^2}{4}\right)\left(1-\frac{t_2^2}{4}\right)\ln\frac{r_{j,2}}{r_{j,1}}+\frac{t_2^2-t_1^2}{2\pi}+\gamma(t_1,t_2)
		\right),
	\end{align}
	where 
	\begin{align*}
		\gamma(t_1,t_2)=\gamma_1(t_1,t_2)+\gamma_2(t_1,t_2),
	\end{align*}
	with 
	\begin{align*}
		\gamma_1(t_1,t_2)&=\widetilde{\alpha}_J(t_1)\left(\frac{2}{\pi}\left(1-\frac{t_2^2}{4}\right)\left(\ln\frac{t_2}{2}+C_0\right)+\frac{t_2^2}{2\pi}+\widetilde{\alpha}_Y(t_2)\right)\\
		&\quad-\widetilde{\alpha}_J(t_2)\left(\frac{2}{\pi}\left(1-\frac{t_1^2}{4}\right)\left(\ln\frac{t_1}{2}+C_0\right)+\frac{t_1^2}{2\pi}+\widetilde{\alpha}_Y(t_1)
		\right)\\
		&\quad+\left(1-\frac{t_1^2}{4}\right)\widetilde{\alpha}_Y(t_2)-\left(1-\frac{t_2^2}{4}\right)\widetilde{\alpha}_Y(t_1),
	\end{align*}
	and 
	\begin{align*}
		\gamma_2(t_1,t_2)=J_0(t_1)H_0(t_2)-J_0(t_2)H_0(t_1).
	\end{align*}
	Using \eqref{eq:alphaJY0}, $k_0R=\pi/5,$ $0.5k_0R\le t_1\le 0.55k_0R,$ and $2.47k_0R\le t_2\le 2.5k_0R,$ it has been proven in \cite{IP18} that 
	$$
	|\gamma_1(t_1,t_2)|<0.2255.
	$$
	Concerning the estimate on $|\gamma_2(t_1,t_2)|,$ we notice that $H_0(t)$ and $J_0(t)$ are continuous with respective to $t.$ Then for $(t_1,t_2)\in[0.5k_0R,0.55k_0R]\times[2.47k_0R,2.5k_0R],$ 
	$$
	|\gamma_2(t_1,t_2)|=\left|J_0(t_1)H_0(t_2)-J_0(t_2)H_0(t_1)\right|\le 0.2847.
	$$
	Combining \eqref{eq:detD}  and the estimates for $|\gamma_1(t_1,t_2)|$ and $|\gamma_2(t_1,t_2)|$ gives
	\begin{align*}
		\left|\det(D_{j, k})\right|&\ge\frac{1}{k_0^4}\left(
		\frac{2}{\pi}\left(1-\frac{t_1^2}{4}\right)\left(1-\frac{t_2^2}{4}\right)\ln\frac{r_{j,2}}{r_{j,1}}+\frac{t_2^2-t_1^2}{2\pi}-\gamma(t_1,t_2)
		\right)\\
		&\ge\frac{1}{k_0^4}\left(0.3554+0.3643-0.2255-0.2847\right)\\
		&>\frac{3}{20k_0^4},
	\end{align*}
	which completes the proof for $\left|\det(D_{j, k})\right|$.
	
	Summarizing the cases (i) and (ii), and letting
	$$
	C:=\min\left\{\frac{1}{2R}\left(1.74-\frac{3}{2\tau}\right), \frac{3k_0}{20}\right\},
	$$
	we obtain the estimate \cref{eq: bound} for $\square=\mathcal{I}$. The estimate for $|\det(D_{j, k}^\Delta)|$ can be deduced similarly.
\end{proof}

The above theorem sheds light on the unique solvability of \cref{eq: equation}, which relies on the parameters chosen in \cref{eq: parameter}. We would like to point out that there may exist other strategies for choosing the parameters in \cref{eq: parameter}.

Once \cref{eq: equation} is solvable, we are now heading for the stability of the phase retrieval formula. For fixed $k$ and $j,$ consider the following perturbed equation system for $\Re(u_{j, k}^\varepsilon)$ and $\Im(u_{j, k}^\varepsilon):$
\begin{equation}\label{eq:disturb}
	\begin{cases}
		A_{j,1,k}\Re(u_{j, k}^\varepsilon)+B_{j,1,k}\Im(u_{j, k}^\varepsilon)=f_{j,1,k}^\varepsilon, \\
		A_{j,2,k}\Re(u_{j, k}^\varepsilon)+B_{j,2,k}\Im(u_{j, k}^\varepsilon)=f_{j,2,k}^\varepsilon,
	\end{cases}
\end{equation}
with
$$
f_{j,\ell, k}^\varepsilon=\frac{4}{c_{j,\ell, k}^\varepsilon}\left(\left|v^\varepsilon_{j,\ell, k}\right|^2-\left|u_{j, k}^\varepsilon\right|^2\right)-\frac{c_{j,\ell, k}^\varepsilon}{16}\left(\left|A_{j,\ell, k}\right|^2+\left|B_{j,\ell, k}\right|^2\right)
$$
and
$$
c_{j,\ell, k}^\varepsilon=\frac{\|u^\varepsilon(\cdot, k)\|_{j,\infty}}{\|\Phi_k(\cdot, z_{j,\ell})\|_{j,\infty}}.
$$
Here, the measured noisy data $\left|u_{j, k}^\varepsilon\right|$ and $\left|v_{j,\ell, k}^\varepsilon\right|(\ell=1,2)$ satisfies
\begin{align*}
	\||u_{j, k}^\varepsilon|-|u_{j, k}|\|_{j,\infty} & \le\varepsilon\|u_{j, k}\|_{j,\infty},\\
	\left\|\left|v_{j,\ell, k}^\varepsilon\right|-\left|\hat{v}_{j,\ell, k}\right|\right\|_{j,\infty} & \le\varepsilon\|\hat{v}_{j,\ell, k}\|_{j,\infty},
\end{align*}
where $\varepsilon\in(0,1),\,\ell=1,2,$ and $\hat{v}_{j,\ell, k}(\cdot)=u_{j, k}(\cdot)-c_{j,\ell, k}^\varepsilon\Phi_k(\cdot, z_{j,\ell}).$

We now present the stability results as follows.
\begin{theorem}\label{thm: stability}
	Under \eqref{eq: parameter}, there exists a constant $C_{\varepsilon}$, such that:
	$$
	\left\|u_{j, k}^\varepsilon-u_{j, k}\right\|_{j,\infty}\le C_\varepsilon\|u_{j, k}\|_{j,\infty},\quad j=1,\cdots, m.
	$$
\end{theorem}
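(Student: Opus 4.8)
The plan is to regard \cref{eq:disturb} as a perturbation of \cref{eq: equation} with $\square = \mathcal{I}$, and to propagate the data error through Cramer's rule using the lower bound on $|\det(D_{j,k})|$ supplied by the preceding theorem. First I would write, for each fixed $j$ and $k$, the solution of the unperturbed system via \eqref{eq:retrieve} and the solution of the perturbed system by the analogous formula with $f_{j,\ell,k}$ replaced by $f_{j,\ell,k}^\varepsilon$ and $c_{j,\ell,k}$ by $c_{j,\ell,k}^\varepsilon$; note that the coefficients $A_{j,\ell,k}$, $B_{j,\ell,k}$ themselves are noise-free, since they depend only on $r_{j,\ell}$, $k$ (see \eqref{eq:A}--\eqref{eq:B}). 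Subtracting, one gets
$$
u_{j,k}^\varepsilon - u_{j,k} = \frac{\det D_{j,k}^{R} - \det \widetilde D_{j,k}^{R}}{\det D_{j,k}} + \mathrm{i}\,\frac{\det D_{j,k}^{I} - \det \widetilde D_{j,k}^{I}}{\det D_{j,k}},
$$
where tildes denote the columns with $f_{j,\ell,k}^\varepsilon$. Expanding the $2\times 2$ determinants, the numerators are linear combinations of $B_{j,\ell,k}$ (resp.\ $A_{j,\ell,k}$) times $(f_{j,\ell,k}^\varepsilon - f_{j,\ell,k})$, so the task reduces to bounding $|f_{j,\ell,k}^\varepsilon - f_{j,\ell,k}|$ and then dividing by $|\det D_{j,k}| \ge C k^{-5}$.

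Next I would estimate $|f_{j,\ell,k}^\varepsilon - f_{j,\ell,k}|$ term by term from the definition \eqref{eq:f} and its perturbed analogue. This splits into (a) the difference of the data terms $\tfrac{4}{c_{j,\ell,k}^\varepsilon}(|v_{j,\ell,k}^\varepsilon|^2 - |u_{j,k}^\varepsilon|^2) - \tfrac{4}{c_{j,\ell,k}}(|v_{j,\ell,k}|^2 - |u_{j,k}|^2)$, and (b) the difference of the correction terms $\tfrac{c_{j,\ell,k}^\varepsilon - c_{j,\ell,k}}{16}(|A_{j,\ell,k}|^2+|B_{j,\ell,k}|^2)$. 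For (b) I would use that $c_{j,\ell,k} = \|u(\cdot,k)\|_{j,\infty}/\|\Phi_k(\cdot,z_{j,\ell})\|_{j,\infty}$ depends on the noisy data only through the numerator, whence $|c_{j,\ell,k}^\varepsilon - c_{j,\ell,k}| \le \varepsilon\, c_{j,\ell,k}$ by the reverse triangle inequality and the noise bound $\||u_{j,k}^\varepsilon|-|u_{j,k}|\|_{j,\infty} \le \varepsilon\|u_{j,k}\|_{j,\infty}$. For (a) I would use $|v_{j,\ell,k}^\varepsilon|^2 - |u_{j,k}^\varepsilon|^2 = (|v_{j,\ell,k}^\varepsilon|-|u_{j,k}^\varepsilon|)(|v_{j,\ell,k}^\varepsilon|+|u_{j,k}^\varepsilon|)$, the noise bounds on $|u_{j,k}^\varepsilon|$ and $|v_{j,\ell,k}^\varepsilon|$ (the latter measured against $\hat v_{j,\ell,k}$, which itself differs from $v_{j,\ell,k}$ only through $c^\varepsilon_{j,\ell,k}-c_{j,\ell,k}$), and the elementary bound $|v_{j,\ell,k}| \le \|u_{j,k}\|_{j,\infty} + c_{j,\ell,k}\|\Phi_k(\cdot,z_{j,\ell})\|_{j,\infty} = 2\|u_{j,k}\|_{j,\infty}$ that follows from the very definition of the scaling factor $c_{j,\ell,k}$. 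Collecting, every term is controlled by $\varepsilon$ times a quantity of the form (polynomial in $k$, $r_{j,\ell}^{-1/2}$) $\times \|u_{j,k}\|_{j,\infty}^2 / c_{j,\ell,k}$ or $\times\, c_{j,\ell,k}(|A_{j,\ell,k}|^2+|B_{j,\ell,k}|^2)$; the asymptotics of $J_0,Y_0,H_0$ from \Cref{lem: K}, \Cref{lem:H01}, and \eqref{eq:errorH0} give $|A_{j,\ell,k}|, |B_{j,\ell,k}| \lesssim k^{-2}(kr_{j,\ell})^{-1/2}$, and $r_{j,\ell}$ is comparable to $R$ under \eqref{eq: parameter}, so all the $k$-powers can be made explicit.

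Finally I would assemble: multiply the numerator bound by $|B_{j,\ell,k}|$ (or $|A_{j,\ell,k}|$), divide by $|\det D_{j,k}| \ge C k^{-5}$, and absorb all $k$-dependent constants — which range over the finite set $\mathbb{K}_N$ — into a single constant; this yields $\|u_{j,k}^\varepsilon - u_{j,k}\|_{j,\infty} \le C_\varepsilon \|u_{j,k}\|_{j,\infty}$ with $C_\varepsilon \to 0$ as $\varepsilon \to 0$, and the same argument with $p=1$ handles $\square=\Delta$. I expect the main obstacle to be bookkeeping rather than anything deep: one must carefully track that $c_{j,\ell,k}^\varepsilon$ appears both in the denominators of $f^\varepsilon$ and implicitly in $\hat v_{j,\ell,k}$, so the perturbation of the scaling factor feeds into several places at once, and one must ensure the lower bound $|\det D_{j,k}|\ge Ck^{-p}$ is not outweighed by negative powers of $k$ hidden in the numerator — which is precisely why the theorem above was stated with the sharp exponent $p$. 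Establishing that the net power of $k$ in $C_\varepsilon$ is nonnegative (equivalently, that the scaling factors $c_{j,\ell,k}$ genuinely balance the two sources uniformly in $k\in\mathbb{K}_N$) is the crux; once that is in hand, the estimate closes.
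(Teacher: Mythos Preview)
Your proposal is correct and follows essentially the same route as the paper: subtract the perturbed from the unperturbed $2\times 2$ system, bound $|f_{j,\ell,k}^\varepsilon-f_{j,\ell,k}|$ via $|c^\varepsilon-c|\le\varepsilon c$ and the noise hypotheses, control $|A_{j,\ell,k}|,|B_{j,\ell,k}|$ through the Bessel asymptotics, and divide by the determinant lower bound from the preceding theorem. The one refinement worth noting is that the paper does not fall back on the finiteness of $\mathbb{K}_N$: it tracks the $k$-powers explicitly so that the $k^{-5}$ in the numerator cancels exactly against the $k^{-5}$ in $|\det D_{j,k}|$, yielding the explicit constant $C_\varepsilon=\sqrt{2}\,\varepsilon\,\eta_\varepsilon/M$ uniformly over all admissible $k$ (with a separate explicit computation at $k=k_0$), which is precisely the ``net power of $k$ nonnegative'' check you identified as the crux.
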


\begin{proof}
	Following the analysis conducted in \cite[Theorem 3.2]{IP18}, we know that 
	\begin{align*}
		&|c_{j,\ell, k}^\varepsilon-c_{j,\ell, k}|\le \varepsilon c_{j,\ell, k},\\ 
		&\left\|\left|v_{j,\ell, k}^\varepsilon\right|-\left|v_{j,\ell, k}\right|\right\|_{j,\infty}\le\varepsilon(3+\varepsilon)\|u_{j, k}\|_{j,\infty},\\
		&\left|\frac{\left|v_{j,\ell, k}^\varepsilon\right|^2}{c_{j,\ell, k}^\varepsilon}-\frac{\left|v_{j,\ell, k}\right|^2}{c_{j,\ell, k}}\right|\le\frac{\varepsilon\left((3+\varepsilon)(2+\varepsilon)^2+2\right)}{1-\varepsilon}\|u_{j, k}\|_{j,\infty}\|\Phi_k(\cdot, z_{j,\ell})\|_{j,\infty},\\
		&\left|\frac{\left|u_{j, k}^\varepsilon\right|^2}{c_{j,\ell, k}^\varepsilon}-\frac{\left|u_{j, k}\right|^2}{c_{j,\ell, k}}\right|\le\frac{\varepsilon(3+\varepsilon)}{1-\varepsilon}\|u_{j, k}\|_{j,\infty}\|\Phi_k(\cdot, z_{j,\ell})\|_{j,\infty}
	\end{align*}
	From the definitions of \eqref{eq:A} and \eqref{eq:f}, we find that
	\begin{align}\nonumber
		\left|f_{j, \ell, k}^\varepsilon-f_{j,\ell, k}\right| & \le 4\left|\frac{\left|v_{j,\ell, k}^\varepsilon\right|^2}{\left|c_{j,\ell, k}^\varepsilon\right|}-\frac{\left|v_{j,\ell, k}\right|^2}{\left|c_{j,\ell, k}\right|}\right|+4\left|\frac{\left|u_{j, k}^\varepsilon\right|^2}{\left|c_{j,\ell, k}^\varepsilon\right|}-\frac{\left|u_{j, k}\right|^2}{\left|c_{j,\ell, k}\right|}\right|\\
		&\quad+\frac{\left|c_{j,\ell, k}^\varepsilon-c_{j,\ell, k}\right|}{16}\left(A_{j,\ell, k}^2+B_{j,\ell, k}^2\right)\nonumber\\
		&\le\varepsilon\eta_\varepsilon\|u_{j, k}\|_{j,\infty}\|\Phi_k(\cdot, z_{j,\ell})\|_{j, \infty},\label{eq:f_error}
	\end{align}
	where 
	$$
	\eta_\varepsilon=\frac{1}{1-\varepsilon}\left(4\left(3+\varepsilon\right)\left(2+\varepsilon\right)^2+4(3+\varepsilon)+8\right)+4 (c_{j,1,k}+c_{j,2,k}).
	$$
	The following analysis is divided into two cases, depending on the value of $k.$
	
	Case (i): $k\in\mathbb{K}_N\backslash\{k_0\}.$ From \cite[(3.23)]{IP18}, we know that
	\begin{align}\label{eq:Y}
		\left|J_0(k r_{j,\ell})\right|\le\frac{0.82}{\sqrt{kR(1-\lambda_{j,\ell})}},\quad
		\left|Y_0(k r_{j,\ell})\right|\le\frac{0.82}{\sqrt{kR(1-\lambda_{j,\ell})}}.
	\end{align}
	From \eqref{eq:errorH0}, it holds that 
	$$
	\left|H_0(k r_{j,\ell})\right|\le\frac{3}{\sqrt{2\pi}}k^{-3/2}r_{j,\ell}^{-3/2}
	\le\frac{3}{(2\pi)^{3/2}\sqrt{kR(1-\lambda_{j,\ell})}}
	\le\frac{0.2}{\sqrt{kR(1-\lambda_{j,\ell})}}.
	$$
	This together with \eqref{eq:Y} gives
	$$
	\left|Y_0(k r_{j,\ell})+H_0(k r_{j,\ell})\right|\le\frac{1.02}{\sqrt{kR(1-\lambda_{j,\ell})}}.
	$$
	Noticing \eqref{eq:A}, we derive that
	\begin{align*}
		|A_{j,\ell, k}|\le\frac{1.02}{k^2\sqrt{kR(1-\lambda_{j,\ell})}},\ \
		|B_{j,\ell, k}|\le\frac{0.82}{k^2\sqrt{kR(1-\lambda_{j,\ell})}},
	\end{align*}
	which, together with \eqref{eq:f_error} and \eqref{eq:Y} gives
	\begin{align*}
		&\quad\left|B_{j, 2, k}\left(f_{j,1,k}^\varepsilon-f_{j,1,k}\right)-B_{j,1,k}\left(f_{j,2,k}^\varepsilon-f_{j,2,k}\right)\right|\\
		&\le\frac{0.82\varepsilon\eta_\varepsilon\|u_{j, k}\|_{j,\infty}}{k^2}
		\left(
		\frac{\|\Phi_k(\cdot, z_{j,1})\|_{j,\infty}}{\sqrt{kR(1-\lambda_{j,2})}}
		+\frac{\|\Phi_k(\cdot, z_{j,2})\|_{j,\infty}}{\sqrt{kR(1-\lambda_{j,1})}}
		\right) \\
		&\le\frac{1.64\varepsilon\eta_\varepsilon\|u_{j, k}\|_{j,\infty}}{5k^4\sqrt{kR(1-\lambda_{j,1})}\sqrt{kR(1-\lambda_{j,2})}}\\
		&\le\frac{\varepsilon\eta_\varepsilon}{k^5R}\|u_{j, k}\|_{j, \infty}.
	\end{align*}
	
	Analogously, we can prove that 
	$$
	\left|A_{j, 2, k}\left(f_{j,1,k}^\varepsilon-f_{j,1,k}\right)-A_{j,1,k}\left(f_{j,2,k}^\varepsilon-f_{j,2,k}\right)\right|\le\frac{\varepsilon\eta_\varepsilon}{k^5R}\|u_{j, k}\|_{j,\infty}.
	$$
	Subtracting \eqref{eq: equation} from \eqref{eq:disturb} gives 
	\begin{equation}\label{eq:error}
		\begin{cases}
			A_{j,1,k}\Re(u_{j, k}^\varepsilon-u_{j, k})+B_{j,1,k}\Im(u_{j, k}^\varepsilon-u_{j, k})=f_{j,1,k}^\varepsilon-f_{j,1,k},\\
			A_{j,2,k}\Re(u_{j, k}^\varepsilon-u_{j, k})+B_{j,2,k}\Im(u_{j, k}^\varepsilon-u_{j, k})=f_{j,2,k}^\varepsilon-f_{j,2,k},
		\end{cases}
	\end{equation}
	which implies that
	\begin{align*}
		&\left|\Re(u_{j, k}^\varepsilon-u_{j, k})\right|=\frac{\left|B_{j,2,k}\left(f_{j,1,k}^\varepsilon-f_{j,1,k}\right)-B_{j,1,k}\left(f_{j,2,k}^\varepsilon-f_{j,2,k}\right)\right|}{\left|\det(D_{j, k})\right|}\le\frac{\varepsilon\eta_\varepsilon}{M}\|u_{j, k}\|_{j,\infty}\\
		&\left|\Im(u_{j, k}^\varepsilon-u_{j, k})\right|=\frac{\left|A_{j,1,k}\left(f_{j,2,k}^\varepsilon-f_{j,2,k}\right)-A_{j,2,k}\left(f_{j,1,k}^\varepsilon-f_{j,1,k}\right)\right|}{\left|\det(D_{j, k})\right|}\le\frac{\varepsilon\eta_\varepsilon}{M}\|u_{j, k}\|_{j,\infty}.
	\end{align*}
	where $M$ is given by \cref{eq: M}. Thus,
	\begin{align}
		\|u_{j, k}^\varepsilon-u_{j, k}\|\le C_\varepsilon\|u_{j, k}\|_{j,\infty},
	\end{align}
	with 
	\begin{equation}\label{eq: c_delta}
		C_\varepsilon=\frac{\sqrt{2}\varepsilon\eta_\varepsilon}{M}.
	\end{equation}
	
	Case (ii): $k=k_0.$ From \cite[(3.28),(3.29)]{IP18}, $0.314\le t_1\le 0.346,$ $1.551\le t_2\le 1.571,$ we know that
	\begin{align}\label{eq:JY0}
		&\left|J_0(t_\ell)\right|<\left\{
		\begin{aligned}
			0.976,&&\ell=1\\
			0.49,&&\ell=2
		\end{aligned}
		\right.,\quad
		\left|Y_0(t_\ell)\right|<\left\{
		\begin{aligned}
			0.777,&&\ell=1\\
			0.413,&&\ell=2
		\end{aligned}
		\right.,\\\label{eq:H00}
		&\left|H_0(t_\ell)\right|<\left\{
		\begin{aligned}
			0.848,&&\ell=1\\
			0.128,&&\ell=2
		\end{aligned}
		\right. 
		.
	\end{align}
	Thus, from \eqref{eq:f_error}, \eqref{eq:JY0}, and \eqref{eq:H00}, we deduce that
	\begin{align*} 
		&\quad\left|B_{j,2,k}\left(f_{j,1,k}^\varepsilon-f_{j,1,k}\right)-B_{j,1,k}\left(f_{j,2,k}^\varepsilon-f_{j,2,k}\right)\right|\\
		&\le\frac{1}{k_0^2}\left(
		0.49\varepsilon\eta_\varepsilon\|u_{j, k}\|_{j, \infty}\|\Phi_k(\cdot, z_{j,1})\|_{j, \infty}+0.976\varepsilon\eta_\varepsilon\|u_{j, k}\|_{j,\infty}\|\Phi_k(\cdot, z_{j,2})\|_{j,\infty}\right)\\
		&\le\frac{0.21}{k_0^4}\varepsilon\eta_\varepsilon\|u_{j, k}\|_{j, \infty},
	\end{align*}
	and
	\begin{align*}
		&\quad\left|A_{j,2,k}\left(f_{j,1,k}^\varepsilon-f_{j,1,k}\right)-A_{j,1,k}\left(f_{j,2,k}^\varepsilon-f_{j,2,k}\right)\right|\\
		&\le\frac{1}{k_0^2}\left(
		0.543\varepsilon\eta_\varepsilon\|u_{j, k}\|_{j,\infty}\|\Phi_k(\cdot, z_{j,1})\|_{j, \infty}+1.627\varepsilon\eta_\varepsilon\|u_{j, k}\|_{j,\infty}\|\Phi_k(\cdot, z_{j,2})\|_{j,\infty}\right)\\
		&\le\frac{0.28}{k_0^4}\varepsilon\eta_\varepsilon\|u_{j, k}\|_{j,\infty}.
	\end{align*}
	Combined with \eqref{eq: bound}, this leads to the conclusion.
\end{proof}

We end this section with a brief description of the stability result regarding $\Delta u_{j, k}.$ The proof can be obtained with a similar argument as analyzed in \cref{thm: stability}, so the proof is omitted.

\begin{theorem}
	Under \eqref{eq: parameter}, the following estimate holds:
	$$
	\left\|\Delta u_{j, k}^\varepsilon-\Delta u_{j, k}\right\|_{j,\infty}\le C_\varepsilon\|\Delta u_{j, k}\|_{j,\infty},\quad j=1,\cdots, m.
	$$
	where $C_\varepsilon$ is given by \cref{eq: c_delta}.
\end{theorem}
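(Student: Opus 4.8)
The plan is to carry over the proof of \Cref{thm: stability} essentially verbatim, now taking $\square=\Delta$ everywhere. First I would record the perturbed linear system: for fixed $j$ and $k$, both the exact Cauchy datum $(\Re\Delta u_{j,k},\Im\Delta u_{j,k})$ and the reconstructed noisy datum $(\Re\Delta u_{j,k}^\varepsilon,\Im\Delta u_{j,k}^\varepsilon)$ satisfy the system \eqref{eq: equation} with $\square=\Delta$, i.e.\ with coefficients $A_{j,\ell,k}^\Delta=H_0(kr_{j,\ell})-Y_0(kr_{j,\ell})$ and $B_{j,\ell,k}^\Delta=J_0(kr_{j,\ell})$ from \eqref{eq:A}--\eqref{eq:B}, and with right-hand sides $f_{j,\ell,k}^\Delta$ and $f_{j,\ell,k}^{\Delta,\varepsilon}$ given by \eqref{eq:f} built from the $\Delta$-data $|\Delta u_{j,k}|$, $|\Delta v_{j,\ell,k}|$ and their noisy counterparts. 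Subtracting the two systems yields the $\Delta$-analogue of \eqref{eq:error}, and Cramer's rule writes $\Re(\Delta u_{j,k}^\varepsilon-\Delta u_{j,k})$ and $\Im(\Delta u_{j,k}^\varepsilon-\Delta u_{j,k})$ as quotients with common denominator $\det(D_{j,k}^\Delta)$.

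The next step is to bound the numerators. The perturbation of the right-hand side is estimated exactly as in the derivation of \eqref{eq:f_error}: the auxiliary estimates on $|c_{j,\ell,k}^\varepsilon-c_{j,\ell,k}|$, on the noisy moduli, and on the quadratic mismatches $|v^\varepsilon|^2/c^\varepsilon-|v|^2/c$ and $|u^\varepsilon|^2/c^\varepsilon-|u|^2/c$ do not see whether one works with $u$ or with $\Delta u$, so one gets $|f_{j,\ell,k}^{\Delta,\varepsilon}-f_{j,\ell,k}^\Delta|\le\varepsilon\eta_\varepsilon\|\Delta u_{j,k}\|_{j,\infty}\|\Delta_x\Phi_k(\cdot,z_{j,\ell})\|_{j,\infty}$ with the same structural factor $\eta_\varepsilon$. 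For the coefficients one now does not divide by $k^2$: in Case (i), $k\in\mathbb{K}_N\backslash\{k_0\}$, the bounds \eqref{eq:Y} and \eqref{eq:errorH0} give at once $|A_{j,\ell,k}^\Delta|\le 1.02\,(kR(1-\lambda_{j,\ell}))^{-1/2}$ and $|B_{j,\ell,k}^\Delta|\le 0.82\,(kR(1-\lambda_{j,\ell}))^{-1/2}$, and the same bounds control $\|\Delta_x\Phi_k(\cdot,z_{j,\ell})\|_{j,\infty}$; in Case (ii), $k=k_0$, the estimates \eqref{eq:JY0}--\eqref{eq:H00} bound $|A_{j,\ell,k_0}^\Delta|$, $|B_{j,\ell,k_0}^\Delta|$ and $\|\Delta_x\Phi_{k_0}(\cdot,z_{j,\ell})\|_{j,\infty}$ by explicit numerical constants. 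Inserting these into the $2\times2$ determinantal numerators shows that $|B_{j,2,k}^\Delta(f_{j,1,k}^{\Delta,\varepsilon}-f_{j,1,k}^\Delta)-B_{j,1,k}^\Delta(f_{j,2,k}^{\Delta,\varepsilon}-f_{j,2,k}^\Delta)|$ and its $A$-analogue are $\le C'\varepsilon\eta_\varepsilon(kR)^{-1}\|\Delta u_{j,k}\|_{j,\infty}$ in Case (i) and $\le C'\varepsilon\eta_\varepsilon\|\Delta u_{j,k}\|_{j,\infty}$ in Case (ii).

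Finally I would divide by $|\det(D_{j,k}^\Delta)|$ using \eqref{eq: bound} with $p=1$, that is $|\det(D_{j,k}^\Delta)|\ge Ck^{-1}$ — concretely $|\det(D_{j,k}^\Delta)|\ge M/(kR)$ in Case (i) and $|\det(D_{j,k}^\Delta)|\ge 3/20$ in Case (ii). The essential observation is that the extra factor $k^2$ gained by each of $A^\Delta$, $B^\Delta$ relative to the $\square=\mathcal I$ case is exactly matched by the data side: the numerators grow by $k^4$ and so does $\det(D_{j,k}^\Delta)$, so the two quotients stay $O(1)$; combining $\Re$ and $\Im$ through the triangle inequality (the $\sqrt2$ factor) gives $\|\Delta u_{j,k}^\varepsilon-\Delta u_{j,k}\|_{j,\infty}\le C_\varepsilon\|\Delta u_{j,k}\|_{j,\infty}$ with $C_\varepsilon$ as in \eqref{eq: c_delta}. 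I expect the only genuinely fiddly part to be this bookkeeping of the powers of $k$ together with checking that the explicit constants in Case (ii) — where, unlike Case (i), there is no decaying factor $(kR)^{-1}$ available to swallow the numerical slack — still close up under the parameter choice \eqref{eq: parameter}; everything else transcribes the proof of \Cref{thm: stability} line for line.
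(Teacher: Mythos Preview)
Your proposal is correct and matches the paper's approach: the paper itself omits the proof entirely, stating only that ``the proof can be obtained with a similar argument as analyzed in \Cref{thm: stability}'', and what you outline is precisely that transcription with $\square=\Delta$, tracking the altered $k$-powers so that the $p=1$ determinant lower bound from \eqref{eq: bound} cancels against the numerator just as $p=5$ did in the $\mathcal I$ case.
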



\section{Numerical experiments}\label{sec: numerical}

In this section, we shall conduct several numerical experiments to verify the performance of the proposed method. 

In the experiments, we solve the forward problem via direct integration, and the unique solution to the biharmonic equation \cref{eq: biharmonic}--\cref{eq:Sommerfeld} is given by \cref{eq: solution}. Correspondingly, $\Delta u(x, k)$ can be written as
$$
\Delta u(x, k)=\int_{V_0}\Delta_x\Phi_k(x, y)S(y)\,\mathrm{d}y.
$$
Once the phaseless data $|u(x, k)|,\left|\Delta u(x, k)\right|$ is available, we add some uniformly distributed random noise to the measurements to test the stability of the algorithm. The following formula produces the noisy phaseless data:
$$
|u^\varepsilon|:=(1+\varepsilon r)|u|, \quad \left|\Delta u^\varepsilon\right|:=(1+\varepsilon r)|\Delta u|,
$$
where $r\in(-1,1)$ is a uniformly distributed random number, and $\varepsilon>0$ is the noise level.

In the numerical experiments, the test source function is chosen to be
\begin{align*}
	S(x_1,x_2) & =0.3(1-x_1)^2\mathrm{e}^{-x_1^2-(x_2+1)^2}-(0.2x_1-x_1^3-x_2^5)\mathrm{e}^{-x_1^2-x_2^2}
	-0.03\mathrm{e}^{-(x_1+1)^2-x_2^2}.
\end{align*}
The surface and contour plots of the exact source function $S$ in the domain $[-3,3]\times[-3,3]$ are plotted in \Cref{fig:exactsource}.

\begin{figure}[htp]
	\centering
	\includegraphics[width=.45\linewidth]{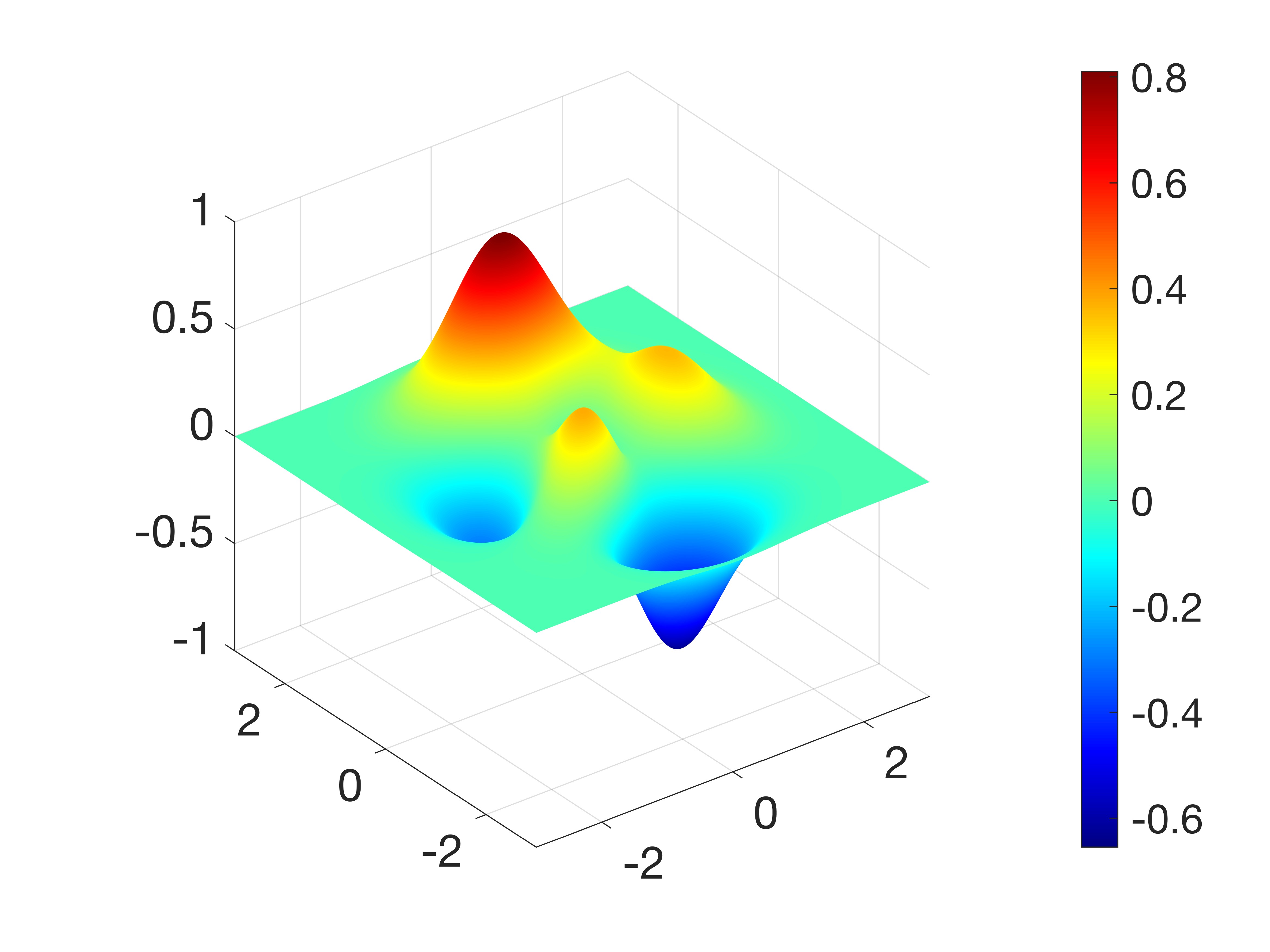}
	\includegraphics[width=.45\linewidth]{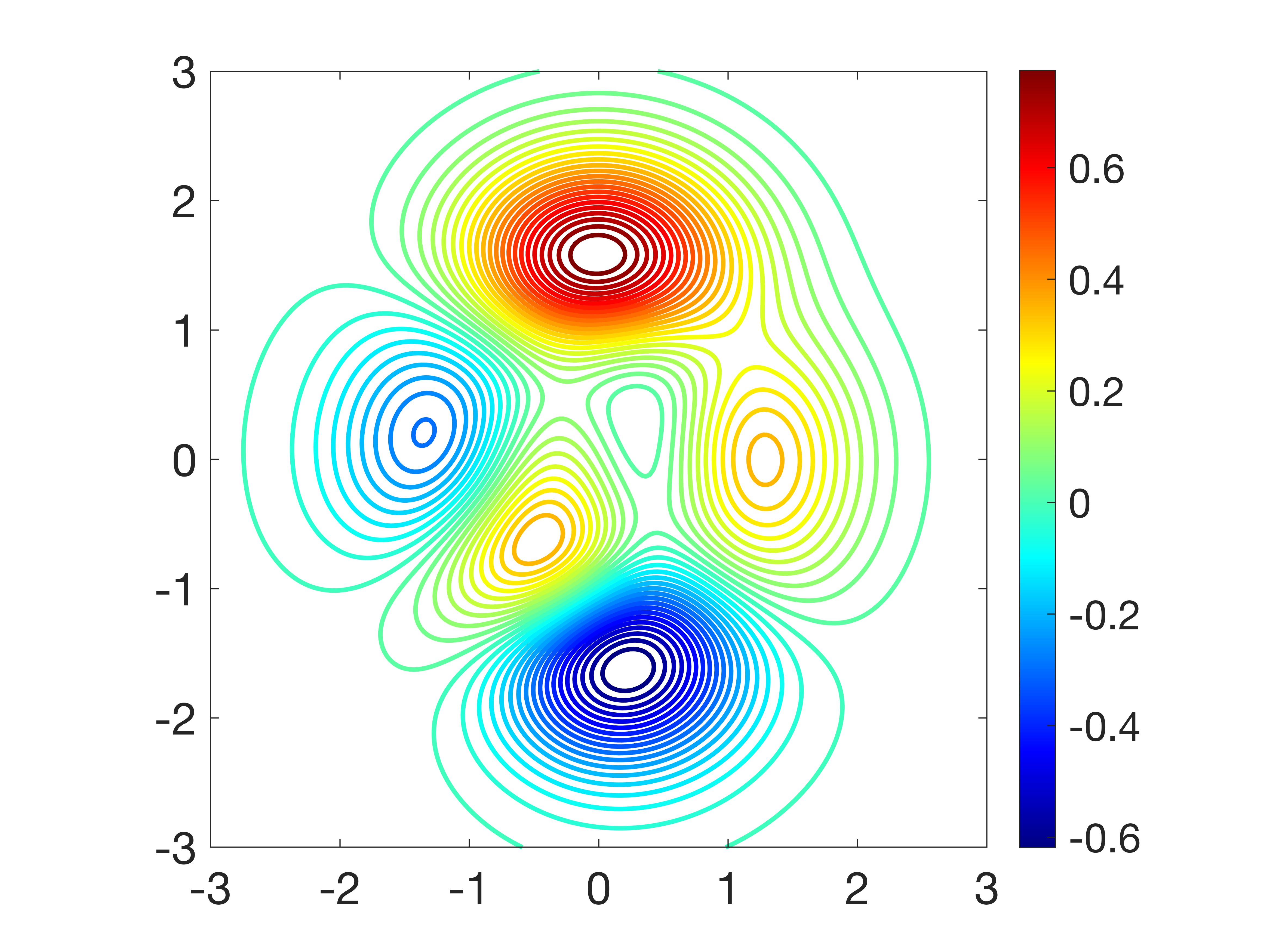}
	\caption{The exact source function $S$. (a) Surface plot; (b) contour plot.}
	\label{fig:exactsource}
\end{figure}

The rest of this section is divided into two parts. In \Cref{sec: phase}, we verify the performance of the phase retrieval algorithm proposed in \Cref{alg: PR}. In \Cref{sec:source}, the Fourier method described in \Cref{alg: Fourier} is adopted to reconstruct the source function $S.$

\subsection{Phase retrieval}\label{sec: phase}

In this subsection, we utilize \Cref{alg: PR} to retrieve the phase information from the measured phaseless radiating data.

We first specify the parameters in \cref{eq: parameter} by taking $a=3, \tau=6, m=10,$ and $k_0=\pi/90.$ In addition, we also consider the phase retrieval by taking the frequencies to be $k={\pi}/{3},\,{5\pi}/{3},$ and ${10\pi}/{3}$, respectively. We take the observation radius to be $R=6 a=18$. For a clear visualization of the model setup, we refer to \Cref{fig: model}, where we choose two typical wave numbers to show the geometry setup. Note that the locations of the artificial source points may differ under different wave numbers. 

\begin{figure}[htp]
	\centering
	\includegraphics[width=.45\linewidth]{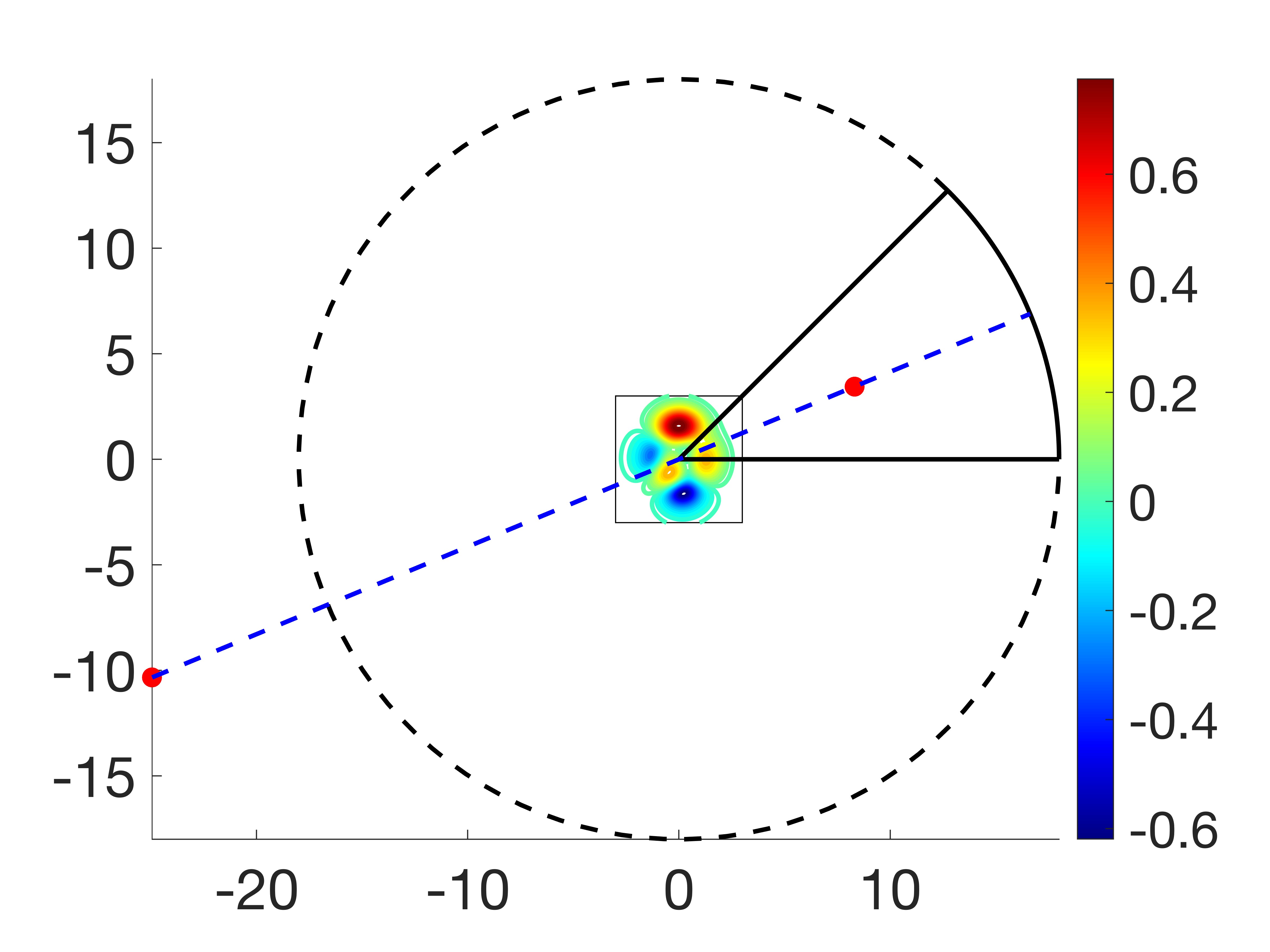}
	\includegraphics[width=.45\linewidth]{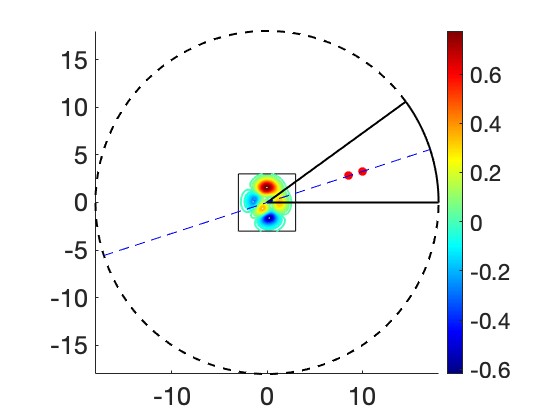}
	\caption{Geometry setup for the phase retrieval technique. The thick black line plots the boundary of $B_1$. The small red points denote the reference source points. (a) $k=k_0=\pi/90;$ (b) $k=\pi.$}
	\label{fig: model}
\end{figure}

For a quantitative evaluation of the accuracy of the phase retrieval, we compare the relative errors between the exact- and reconstructed phase data in \Crefrange{tab:erroruL2}{tab:errorDeltauInf}, where the discrete relative $L^2$ and infinity errors are respectively defined by 
\begin{align*}
	&\frac{\left(\sum\limits_{n=1}^{N_j}\left|w(x_n, k)-w^\varepsilon(x_n, k)\right|^2\right)^{1/2}}{\left(\sum\limits_{n=1}^{N_j}\left|w(x_n, k)\right|^2\right)^{1/2}},\quad x_n\in\Gamma_j,\quad j=1,\cdots, m, \\
	&\frac{\max\limits_{x_n\in\Gamma_j}\left|w(x_n, k)-w^\varepsilon(x_n, k)\right|}{\max\limits_{x_n\in\Gamma_j}\left|w(x_n, k)\right|},\quad x_n\in\Gamma_j,\quad j=1,\cdots, m.
\end{align*}
Here, $N_j$ is the number of the measurement points located on $\Gamma_j,$ $w$ and $w^\varepsilon$ designate the exact and retrieved data, respectively, with $w$ being $u$ or $\Delta u,$ depending on the measured phaseless field. To make the performance more reliable, we conducted 10 times of simulations, and all the listed outputs are the average results. From \Crefrange{tab:erroruL2}{tab:errorDeltauInf}, we find that the error of phase retrieval reduces when the noise level decreases, which is consistent with the theoretical analysis. Though the retrieval error is relatively large when the noise level increases, we can see in the next subsection that this discrepancy can be viewed as a perturbation to the radiated field, which would not substantially affect the final reconstruction of the source.

\begin{table}
	\renewcommand{\arraystretch}{1.2} 
	\centering
	\caption{The relative $L^2$ error between $u(\cdot, k)|_{\Gamma_1}$ and $u^\varepsilon(\cdot, k)|_{\Gamma_1}$}\label{tab:erroruL2}
	\begin{tabular}{lllll}
		\hline
		&$k_0=\frac{\pi}{90}$&$k=\frac{\pi}{3}$&$k=\frac{5\pi}{3}$&$k=\frac{10\pi}{3}$\\\hline
		$\varepsilon=0$&$6.58\times10^{-16}$&$6.45\times10^{-16}$&$3.18\times10^{-16}$&$5.20\times10^{-16}$\\
		$\varepsilon=0.1\%$&$0.24\%$&$0.26\%$&$0.19\%$&$0.20\%$\\
		$\varepsilon=1\%$&$3.66\%$&$2.56\%$&$1.93\%$&$1.48\%$\\
		$\varepsilon=5\%$&$15.11\%$&$13.54\%$&$12.59\%$&$10.57\%$ \\
		\hline
	\end{tabular}
\end{table}

\begin{table}
	\renewcommand{\arraystretch}{1.2} 
	\centering
	\caption{The relative $L^2$ error between $\Delta u(\cdot, k)|_{\Gamma_1}$ and $\Delta u^\varepsilon(\cdot, k)|_{\Gamma_1}$}\label{tab:errorDeltauL2}
	\begin{tabular}{lllll}
		\hline
		&$k_0=\frac{\pi}{90}$ & $k=\frac{\pi}{3}$ & $k=\frac{5\pi}{3}$ & $k=\frac{10\pi}{3}$ \\\hline
		$\varepsilon=0$&$3.49\times10^{-16}$&$4.21\times 10^{-16}$&$2.65\times10^{-16}$&$4.52\times10^{-16}$\\
		$\varepsilon=0.1\%$&$0.19\%$&$0.24\%$&$0.14\%$&$0.14\%$\\
		$\varepsilon=1\%$&$1.52\%$&$2.29\%$&$2.89\%$&$2.36\%$\\
		$\varepsilon=5\%$&$9.17\%$&$7.77\%$&$14.59\%$&$11.98\%$\\
		\hline
	\end{tabular}
\end{table}

\begin{table}
	\renewcommand{\arraystretch}{1.2} 
	\centering
	\caption{The relative infinity error between $u(\cdot, k)|_{\Gamma_1}$ and $u^\varepsilon(\cdot, k)|_{\Gamma_1}$}\label{tab:erroruInf}
	\begin{tabular}{lllll}\hline
		& $k_0=\frac{\pi}{90}$ & $k=\frac{\pi}{3}$ & $k=\frac{5\pi}{3}$ & $k=\frac{10\pi}{3}$\\\hline
		$\varepsilon=0$&$1.13\times10^{-15}$&$6.27\times10^{-16}$&$3.65\times10^{-16}$&$6.21\times10^{-16}$ \\
		$\varepsilon=0.1\%$&$0.33\%$&$0.27\%$&$0.23\%$&$0.25\%$ \\
		$\varepsilon=1\%$&$6.16\%$&$2.48\%$&$1.79\%$&$1.58\%$ \\
		$\varepsilon=5\%$&$19.32\%$&$17.16\%$&$18.10\%$&$13.72\%$ \\
		\hline
	\end{tabular}
\end{table}

\begin{table}
	\renewcommand{\arraystretch}{1.2} 
	\centering
	\caption{The relative infinity error between $\Delta u(\cdot, k)|_{\Gamma_1}$ and $\Delta u^\varepsilon(\cdot,k)|_{\Gamma_1}$}\label{tab:errorDeltauInf}
	\begin{tabular}{lllll}
		\hline
		& $k_0=\frac{\pi}{90}$ & $k=\frac{\pi}{3}$ & $k=\frac{5\pi}{3}$&$k=\frac{10\pi}{3}$\\\hline
		$\varepsilon=0$ & $6.22\times10^{-16}$ & $6.46\times10^{-16}$&$3.71\times10^{-16}$&$6.09\times10^{-16}$\\
		$\varepsilon=0.1\%$&$0.29\%$&$0.35\%$&$0.18\%$&$0.18\%$\\
		$\varepsilon=1\%$&$2.47\%$&$3.11\%$&$3.62\%$&$2.98\%$\\
		$\varepsilon=5\%$&$12.75\%$&$11.92\%$&$17.49\%$&$11.85\%$\\
		\hline
	\end{tabular}
\end{table}

\subsection{Reconstruct the source}\label{sec:source}
Once the phase is retrieved from the phaseless data, we can apply the Fourier method described in \Cref{sec: Fourier} to recover the source function from the multifrequency data.  Following \cite{CGYZ23}, we take the truncation of the Fourier expansion to be
$$
N=5\left[\varepsilon^{-1/4}\right],
$$
with $[X]$ being the largest integer that is smaller than $X+1.$ We use $400$ pseudo-uniformly distributed measurements on $\Gamma_R$ and $\Gamma_\rho$ with $\rho=20.$ We compute the relative $L^2$ error of the reconstruction over $V_0$ with $600\times600$ equidistantly distributed space grid. Concerning more implementation details, we refer to \cite{CGYZ23}.

\begin{figure}[htp]
	\centering 
	\includegraphics[width=.45\linewidth]{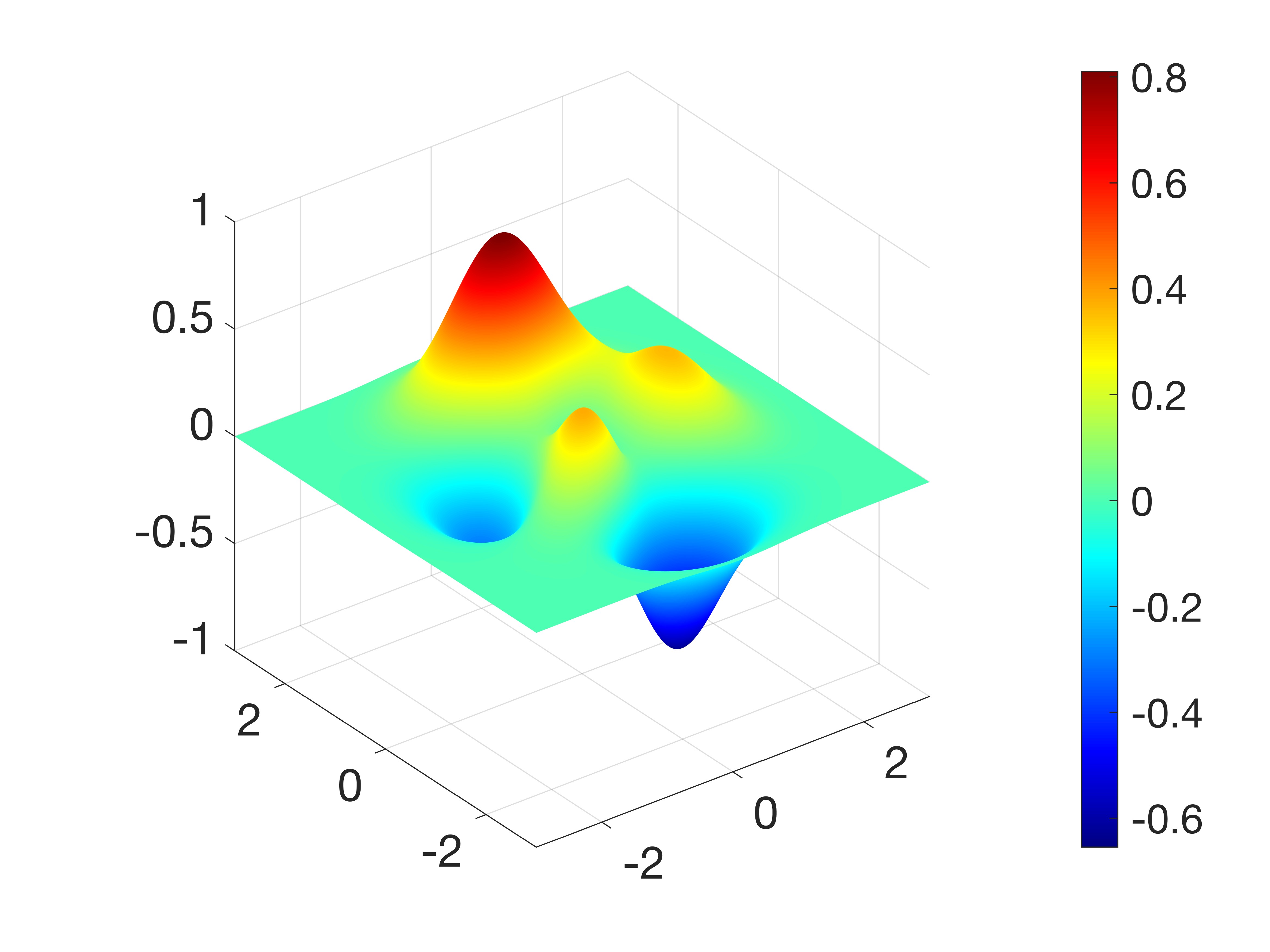}
	\includegraphics[width=.45\linewidth]{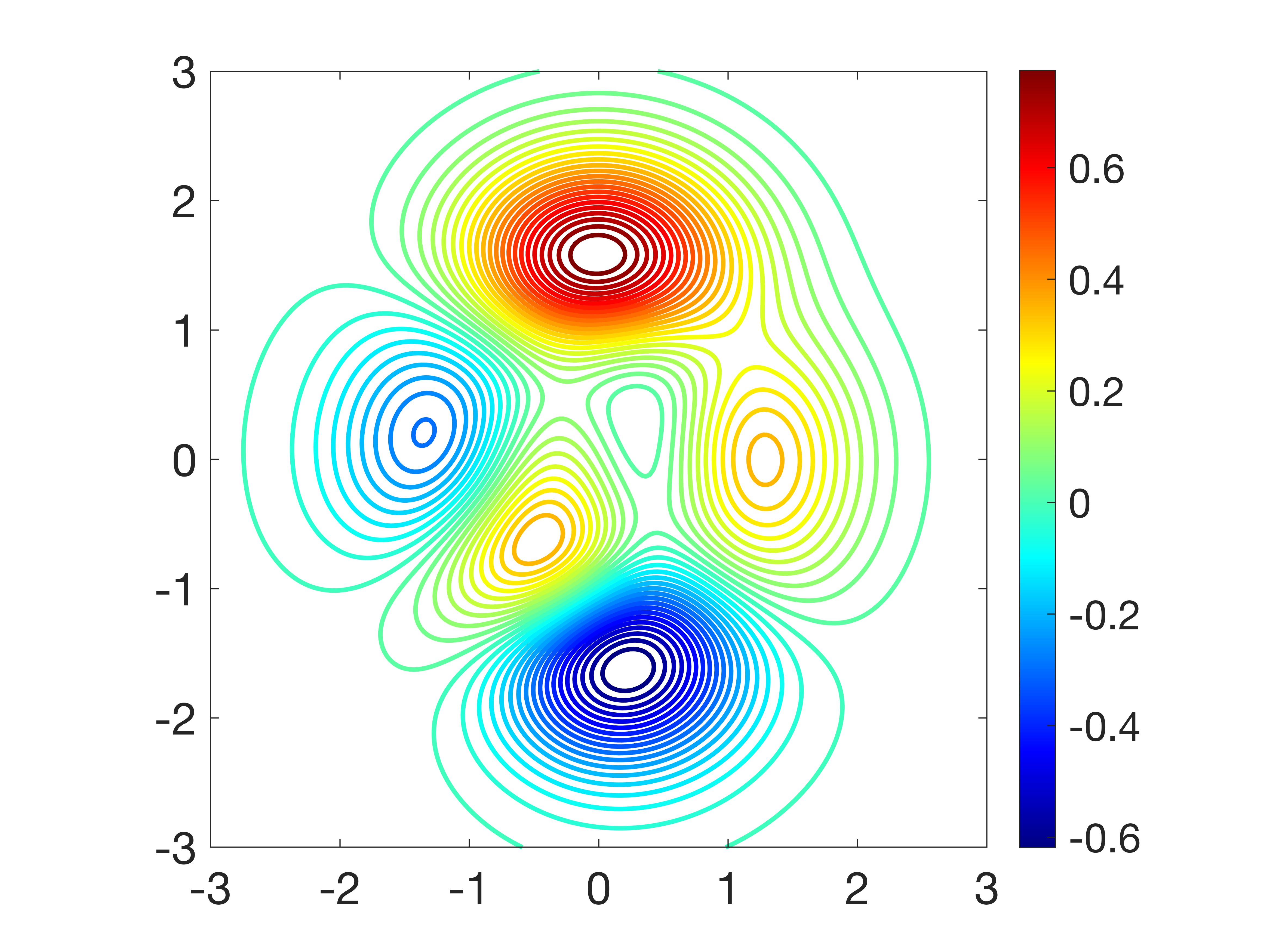}
	\caption{The reconstruction of source function $S$ with $\varepsilon=1\%$. (a) Surface plot; (b) contour plot.}
	\label{fig: reconstruction}
\end{figure}

The reconstruction with $1\%$ noise is illustrated in \Cref{fig: reconstruction}. Further, the relative $L^2$ errors with different noise levels are listed in \Cref{tab: error}.

\begin{table}
	\centering	
	\caption{The relative $L^2$ error between the reconstructed and the exact source function.}\label{tab: error}
	\begin{tabular}{ccccc}\hline
		$\varepsilon$ & $1\%$ & $5\%$ & $10\%$ & $20\%$\\\hline
		error & $1.26\%$ & $1.30\%$ & $1.37\%$ & $1.38\%$\\
		\hline
	\end{tabular}
\end{table}

From \Cref{tab: error}, it can be seen that the Fourier method is robust to the noise and has the capability of producing very high-resolution reconstructions.  Even if the noise level is up to $20\%,$ the final reconstruction error persists stunningly below $2\%.$ Recall that in Subsection \ref{sec: phase}, the largest phase retrieval error is $19.32\%.$ Although the intermediate phase retrieval step seems to be somehow (but reasonably) inaccurate with large noise, the eventual reconstructions are still satisfactory.

	\end{document}